\newtheorem{theorem}{Theorem}[section]
\newtheorem{lemma}[theorem]{Lemma}
\newtheorem{corollary}[theorem]{Corollary}
\theoremstyle{definition}
\newtheorem{definition}[theorem]{Definition}
\theoremstyle{remark}
\numberwithin{equation}{section}
\newcommand{\sep}{\vspace{0.5em}}
\newcommand{\dm}[1]{$\displaystyle #1$}
\newcommand{\RR}{{\mathbb{R}}}
\newcommand{\CC}{{\mathbb{C}}}
\newcommand{\h}[1]{\widehat{#1}}
\newcommand{\w}[1]{\widetilde{#1}}
\newcommand{\Zm}{\mathbb{Z}^m}
\newcommand{\IN}{\mathbb{I}_N}
\newcommand{\Nb}{\overline{N}}
\newcommand{\yl}{y_\ell}
\newcommand{\rl}{r_\ell}
\newcommand{\rk}{\rho_k}
\newcommand{\KK}{\w{K}}
\newcommand{\BR}{B_R}
\newcommand{\vphi}{\varphi}
\newcommand{\vphic}{\varphi^c}
\newcommand{\norm}[1]{\Vert{#1}\Vert}
\newcommand{\Ci}{\operatorname{Ci}}
\newcommand{\Si}{\operatorname{Si}}
\newcommand{\sinc}{\operatorname{sinc}}
\newcommand{\erf}{\operatorname{erf}}
\begin{document}

\title[Singular Quadrature Rules and Fast Convolutions]{Singular Quadrature Rules and Fast Convolutions for Fourier Spectral Methods}

\author{Jae-Seok Huh}
\address{Computational Mathematics Group, Computer Science and Mathematics Division, Oak Ridge National Laboratory, Oak Ridge, TN 37831.}
\email{huhj@ornl.gov}

\author{George Fann}
\address{Computational Mathematics Group, Computer Science and Mathematics Division, Oak Ridge National Laboratory, Oak Ridge, TN 37831.}
\email{fanngi@ornl.gov}

\subjclass[2010]{Primary 65R20, 65T99}

\date{June 28, 2012}

\keywords{corrected trapezoidal rule, singular kernel, fast convolution}

\begin{abstract}
We present a generic scheme to construct corrected trapezoidal rules with spectral accuracy
for integral operators with weakly singular kernels in arbitrary dimensions.
We assume that the kernel factorization of the form, $K=\alpha\,\phi+\w{K}$ with smooth $\alpha$ and $\w{K}$, is available
so that the operations on the smooth factors can be performed accurately on the basis of standard Fourier spectral methods.
To achieve high precision results, our approach utilizes the exact evaluation of the Fourier coefficients of the radial singularity $\phi$,
which can be obtained in arbitrary dimensions by the singularity isolation/truncation described in this article.
We provide a complete set of formulae for singularities of the type: $\log(r)$ and $r^{-\nu}$.
Convergence analysis shows that the constructed quadrature rules exhibit almost identical rate of convergence to the trapezoidal rule
applied for non-singular integrands.
Especially, for smooth data, the corrected trapezoidal rules converge super-algebraically.
\end{abstract}

\maketitle

\section{Introduction}
We consider the problem of evaluating the integral,
\begin{equation}
\label{eqn:Ix}
	(If)(x) = \int_D K(x,y) \, f(y) \, dy ,
\end{equation}
where the kernel $K$ may have a point singularity at $x=y$.
We assume that $f$ is compactly supported on $D$ and can be represented satisfactorily by a Fourier series.
When the kernel is also smooth (or sufficiently regular for the purpose),
the usual trapezoidal rule with the integrand $(K\cdot f)$ evaluated on a uniform grid
is a classical rule of thumb for the construction of a spectral scheme involving the integral operator (\ref{eqn:Ix}).
Additionally, many kernels of interest are given by $K(x,y)=K(x-y)$.
Then, the Fourier representation of the kernel and the data enables the fast evaluation of the multiplier operator $I$ via the FFT.
However, the singularities in most kernels of interest render the trapezoidal rule unapplicable,
and we need a corrected one.

We assume that the kernel can be factored to the form,
\begin{equation}
\label{eqn:K}
	K(x,y) = \alpha(x,y) \, \phi(r) + \KK(x,y)
\end{equation}
where $\alpha$ and $\KK$ are smooth functions and $r=\norm{y-x}$ is the Euclidean distance in $\RR^m$.
We assume that the singularity is known and carried entirely by the radial function $\phi$.
For the derivation of the quadrature weights, $\alpha$ and $\KK$ may depend explicitly on the target point $x$,
in which case the weights should be constructed for each $x$.
The main subject of this paper is the construction of corrected trapezoidal rules which exhibit 
the same spectral accuracy as the usual trapezoidal rule applied for smooth kernels.

The constructed quadrature rules can be written as
\begin{equation}
	(If)(x) \sim h \sum_{y_\ell\in\BR} w_\ell f(y_\ell) + h \sum_{y_\ell\ne x} K(x,y_\ell) f(y_\ell) ,
\end{equation}
where the contribution of the singularity is reproduced by the correction weights $w_\ell$ in a neighborhood $\BR$ of $x$.
The presented scheme can also be viewed as a regularization of the kernel such that
the replacement of $\phi$ with the regularized $\w{\phi}$ does not impair the accuracy of the regular trapezoidal rule with a smooth kernel at the given sampling frequency.
In what follows are a few comments regarding the key features of our method and the relationship to other quadrature rules and applications available in the literature.

\begin{enumerate}[(a)]
\item
One of the main advantages of our approach is the high precision of the resulting quadrature rules.
The construction does not involve any other non-trivial quadrature rule or a solve of a linear system
both of which typically limit the achievable order of accuracy.
In our approach, the FFT is the only numerical algorithm used and
the accumulation of the rounding error of the FFT is only $O(\epsilon_{mach}\log N)$.
To achieve the high precision, the Fourier coefficients of the truncated singularity $\phi$ are evaluated up to the machine precision.
One of the major contributions of this paper is the complete recipe we present for the logarithmic and the power-law singularities.
\sep
\item
The second advantage is its applicability for any dimension.
The key functions required for the construction can obtained by recurrence relations presented in main body of the paper.
Except for the $r^{-\nu}$ singularities with a non-integer $\nu$ (which is less significant in applications than the integer cases, we think),
the construction does not require an implementation of a new special function. 
\sep
\item
We present the factored forms of the Helmholtz kernels in arbitrary dimensions so that the corrected quadrature rules can be readily obtained from them.
The Helmholtz kernels are of great importance in applications.
We present numerical examples which demonstrate the advantage of the presented quadrature rules in the construction of high order integral operators
especially for oscillating kernels with high wavenumbers.
\sep
\item
We do not pursue in this study the end-point corrected trapezoidal rules for non-periodic data, which can be found in \cite{Al95,KaRo97,Ro90}.
In principle, an end-point correction is equivalent to an accurate estimation of derivatives at and near the end-points,
which can be done by an over-sampling or by a certain type of data extension.
For the latter, a simple extrapolation results in a terrible oscillation, which limits the achievable order of accuracy.
Hence, most successful \emph{high-order} methods involves a solve of a least-squares problem.
An accurate and stable extension is possible also for the trigonometric basis functions (cf. \cite{Hu09}).
We can represent a smooth non-periodic function by a smooth compactly supported interior and a near boundary part.
In this paper, we focus on the former case and the correction for the near boundary part will be considered in a separate study.
The quadrature rules in \cite{Al95,KaRo97,Ro90} are for one-dimensional singular integrals and
the multidimensional extension is not so straightforward
since a weak singularity in a higher dimension can not always be represented by a product of one-dimensional weak singularities.
Multi-dimensional singular quadrature rules of relatively low orders can be found in \cite{St95}, which are applied for non-oscillatory kernels.
\sep
\item
When $f$ is periodic and the domain of the convolution (\ref{eqn:Ix}) is unbounded,
the kernel can be factored as $K=\vphi\,K+(1-\vphi)K$ by a smooth cut-off function $\vphi$.
The convolution can be recast to
\begin{equation}
	I f = \int K\cdot(\vphi f) + \int \big((1-\vphi)K\big)\cdot f ,
\end{equation}
where the first integral involves the singular kernel and the compactified $(\vphi f)$,
and the second integral the convolution of the periodic $f$ with the regularized kernel $(1-\vphi)K$
where the trapezoidal rule can be applied without any correction.
The Ewald summation (cf. \cite{DaYoPe93}) focuses on the design of the smooth kernel 
so that the Fourier coefficients of the regularized kernel decays rapidly.
Classical applications of the Ewald summation are for point sources, hence, the first integral is merely a summation.
Our focus is more on the accurate evaluation of the first integral for continuous $f$.
\sep
\item
The data $f$ or their Fourier coefficients do not necessarily have to be given on a uniform grid as long as
equivalently fast and accurate algorithms are provided.
For the extension of the quadrature rules to non-uniformly sampled data, one can refer to \cite{DuRo93,GrLe04}.
For sparse data as well as rapidly decreasing kernels, the use of the partial FFT (cf. \cite{BaSw91}) can also be utilized.
\sep
\item
The kernel factorization of the form (\ref{eqn:K}) can be found in numerous articles.
Probably, the most well-known examples are the Nystr\"om methods for boundary integral equations such as \cite[\S3.5]{CoKr98}.
In \cite{BrKu01,YiBiZo06}, combined with partition-of-unity boundary decomposition, Nystr\"om-type schemes are presented for 
boundary integral equations in $\RR^3$.
Spectral schemes in \cite{BrKu01,YiBiZo06} commonly remove the $1/r$-singularity by a polar change of variables,
and evaluate each of the resulting one-dimensional integrals by the trapezoidal rule using resampled data
without building quadrature weights explicitly.
\sep
\item
In order to achieve a proper spectral convergence,
the kernel should be correctly factored so that $\alpha$ and $\KK$ are both smooth.
Regarding this issue, we should keep the fact in mind that $r^2$ is smooth but $r$ is not even in $C^1$.
The following factorization might look attractive since the singularity is isolated independently of the wavenumber.
\begin{equation}
	\frac{e^{ikr}}{4\pi r} = \frac{1}{4\pi r} + \left( \frac{\cos(kr)-1}{4\pi r} + i\frac{\sin(kr)}{4\pi r} \right) .
\end{equation}

However, the above form should be avoided since $(\cos(kr)-1)/r$ is not smooth.
A correct form of factorization is
\begin{equation}
	\frac{e^{ikr}}{4\pi r} = \frac{\cos(kr)}{4\pi r} + i\frac{\sin(kr)}{4\pi r}
\end{equation}
where $\alpha(r)=\cos(kr)/(4\pi)$, $\phi(r)=1/r$, and $\KK(r)=i\sin(kr)/(4\pi r)$.
Readers will soon notice that all the well-behaved functions appearing in this paper have power series expansions containing
only even powers of $r$.
\end{enumerate}

This paper is organized as follows:
in \S2, we begin with the detailed derivation of the quadrature rules,
which will be followed by a complete set of formulae for the evaluation of the Fourier coefficients of the radially-truncated singularities
of the type: $\log(r)$ and $r^{-\nu}$.
Then, we present the factored forms for the Helmholtz kernels in arbitrary dimensions.
In \S3, we discuss the order of accuracy of the quadrature rules related to the regularity of the data.
Finally, the results of numerical experiments will be presented in \S4.

\section{Construction of Quadrature Weights}

\subsection{Notations, grids, and DFTs}
Let $U=[-1,1]^m$ be the computational domain in $\RR^m$.
We consider a uniform grid on $\RR^m$ with the spacing $1/N_j$ in the $j$th dimension.
Thus, for any multi-index $\ell=(\ell_1,\ldots,\ell_m)\in\Zm$,
the corresponding grid point $\yl=(y_{\ell_1},\ldots,y_{\ell_m})$ is given by $y_{\ell_j} = \ell_j/N_j$.
We assume that the physical domain is given as an affine image of $U$ (without translation for the sake of simplicity),that is, a parallelotope in $\RR^m$.
We denote by $\chi$ this non-degenerate linear mapping $U\mapsto \chi U$, then the related metric tensor is constant and is given by $\chi^T\chi$.
We use the symbol $r$ for the Euclidean distance in the physical domain.
That is,
\begin{equation}
	r(x,y) \equiv \norm{\chi(y-x)} .
\end{equation}
With the target $x\in U$ fixed and $r$ viewed as a function of $y\in U$,
we view (\ref{eqn:Ix}) as an integral on $U$ with the Jacobian determinant $|\chi|\equiv\sqrt{\det(\chi^T\chi)}$ multiplied afterward.
The function $r$ is not smooth at the point of singularity due to the square root but $r^2$ is a smooth quadratic polynomial.
Note also that the singularity $\phi$ is not radial in $U$ (but is radial in $\chi U$).

Let $\Nb\equiv\prod_{j=1}^mN_j$ and define $\IN\subset\Zm$ by
\begin{equation}
	\IN \equiv \big\{ (\ell_1,\ldots,\ell_m) \,|\, -N_j \le \ell_j \le N_j-1 \big\} .
\end{equation}
Let $F$ be a periodic function on $U$.
Application of the trapezoidal rule to the (periodic) Fourier transform
\begin{equation}
\label{eqn:ft}
	\h{F}(k) =  \frac{1}{2^m} \int_{\RR^m} F(y)\,e^{-i\pi k\cdot y} \,dy
\end{equation}
results in the formula for the DFT
\begin{equation}
\label{eqn:dft}
	\h{F}_k =  \frac{1}{2^m\Nb} \sum_{\ell\in\IN} F(\yl) \,e^{-i\pi k\cdot\yl} .
\end{equation}
The truncation applied to the inverse Fourier transform (of delta functions on $\Zm$)
\begin{equation}
\label{eqn:ift}
	F(y) = \sum_{k\in\Zm} \h{F}(k)\,e^{i\pi k\cdot y} 
\end{equation}
defines the inverse DFT by
\begin{equation}
\label{eqn:idft}
	F_\ell = \sum_{k\in\IN} \h{F}_k\,e^{i\pi k\cdot\yl} .
\end{equation}
Note that the Fourier coefficients $\h{F}_k$ approximated by the DFT contains error due to the aliasing.
Suppose we are given exact samples, i.e. $F_\ell=F(\yl)$.
The interpolation $\w{F}$ of the samples defined by
\begin{equation}
\label{eqn:intp}
	\w{F}(y) = \sum_{k\in\IN} \h{F}_k\,e^{i\pi k\cdot y} \quad\text{where $\h{F}_k$ is the DFT of $F_\ell$}
\end{equation}
involves two sources of error:
(1) the aliasing contained in $\h{F}_k$ and (2) the truncation error of the inverse DFT.
The decay characteristics of the exact Fourier coefficients $\h{F}(k)$ takes a central role in the estimation of  the error.

\subsection{The support of the data}
We assume that the data (or source) $f$ in (\ref{eqn:Ix}) viewed as a function on the computational domain is supported on $[0,1]^m$.
Then, $f$ can be extended on $U$ by padding zeroes.
The rationale behind this is quite obvious;
since we take $x+U$ as the support of the kernel $K(x,\cdot)$ (or equivalently, the domain of integral) for the target $x$,
if $f$ is not extended, the convolution will include the effect of the \emph{fictitious} portion of the periodized source.
Similarly, if $x$ is not in $[0,1]^m$, the result $(If)(x)$ will include the effect of the fictitious source,
hence, the portion on $U\setminus[0,1]^m$ of $(If)(x)$ obtained on $U$ should be discarded.

For $x\not\in[0,1]^m$, the integrand becomes smooth, hence, the usual trapezoidal rule serves our purpose well without any correction.
The resulting summation can be accelerated by any fast multipole (or an equivalent) method designed for the kernel.
In this paper, we focus ourselves on the near-field solution for $x\in[0,1]^m$.

On this setting, the translation $f(y-x)$ by $x\in[0,1]^m$ results in the translated source vanishing on the boundary of $U$ and satisfying all the assumptions.
Therefore, in this section, without loss of generality, we assume that the target point $x$ is at the origin.
Every function is viewed as a function of the source point $y$ only and the symbol $x$ will be omitted.
Thus, we recast (\ref{eqn:Ix}) to the integral,
\begin{equation}
\label{eqn:II}
	I(f) = |\chi| \int_{U} K(y) \, f(y) \, dy .
\end{equation}

\subsection{Localization of the singularity}
Let $r(y)\equiv\norm{\chi y}$.
With a slight abuse of notation, we denote by $\BR$ an ellipsoid in $U$ given by $\BR\equiv\{\, r(y) \le R \,\}$ for some $R>0$,
where $R$ is chosen such that the image $\chi\,\BR$ (a ball of radius $R$ in the physical domain) is contained in $\chi\,U$.
For the accuracy, the best choice of $R$ is $\max_{y\in\partial U} r(y)$ so that $\BR$ contains as many grid points as possible.

Let $\vphi$ be any smooth even function on $\RR$ such that
(1) $\vphi(0)=1$, (2) $\vphi=0$ on $\RR\setminus(-R,R)$, and (3) $\vphic=(1-\vphi)$ vanishes smoothly at the origin.
Then, $(\phi(r)\,\vphic(r))$ is also a smooth function and vanishes at the origin.
For all the numerical experiments in this paper, we utilized $\vphi(r)=\vphi_1(r/R)$ where
$\vphi_1$ is the sigmoidal function,
\begin{equation}
	\vphi_1(t) = \begin{cases} e^{-e^{-2/|t|}/(1-|t|)^2} & |t| < 1 \\ 0 & |t| \ge 1 \end{cases} .
\end{equation}

Then, we isolate the singularity by using the identity
\begin{equation}
	\phi(r) = \phi(r)\,\vphi(r) + \phi(r)\,\vphic(r)
\end{equation}
and rewrite (\ref{eqn:II}) as
\begin{equation}
	I(f) = |\chi| \int_{B_R} \phi(r)\,F(y) \,dy + |\chi| \int_{U} G(y) \,dy
\end{equation}
where $F$ and $G$ are regular periodic functions given by
\begin{align}
	F(y) &= \vphi(r)\,\alpha(y)\,f(y) \\
	G(y) &= \vphic(r)\,\phi(r)\,\alpha(y)\,f(y) + \KK(y)\,f(y) .
\end{align}

The second integral without the singularity can be treated well by the usual trapezoidal rule; that is,
\begin{equation}
\label{eqn:quad:G}
	\int_{U} G(y) \,dy \sim \frac{|\chi|}{\Nb} \sum_{\ell\in\IN} \left( \vphic(\rl)\,\phi(\rl)\,\alpha(\yl) + \KK(\yl) \right) f(\yl) .
\end{equation}
For the first integral with the singularity, we utilize the interpolation (\ref{eqn:intp}) to obtain
\begin{align}
	|\chi| \int_{\BR} \phi(r) \, F(y) \, dy
	&\sim |\chi| \sum_{k\in\IN} \h{F}_k \int_{\BR} \phi(r) \, e^{i\pi k\cdot y} \, dy \\
	&= \frac{|\chi|}{\Nb} \sum_{\ell\in\IN} F(\yl) \sum_{k\in\IN} \left( \frac{1}{2^m} \int_{\BR} \phi(r) \, e^{i\pi k\cdot y} \, dy \right) \\
	&\equiv \frac{|\chi|}{\Nb} \sum_{\ell\in\IN} F(\yl) \sum_{k\in\IN} \h{\phi}(k) \\
	&\equiv \frac{|\chi|}{\Nb} \sum_{\ell\in\IN} F(\yl) \, \w{\phi}_\ell ,
	\label{eqn:quad:F}
\end{align}
where we defined
\begin{equation}
	\h{\phi}(k) \equiv \frac{1}{2^m} \int_{\BR} \phi(r) \, e^{-i\pi k\cdot y} \, dy
	\quad\text{and}\quad
	\w{\phi}_\ell \equiv \sum_{k\in\IN} \h{\phi}(k) .
\end{equation}
We can change the sign in the exponential function arbitrarily since $r(-y)=r(y)$.
Notice that $\h{\phi}(k)$ are the \emph{exact} Fourier coefficients of $\phi(r)$
which is truncated to zero on the exterior of $\BR$ and viewed as a periodic function on $U$.
Or equivalently, we can consider $\h{\phi}$ as the (non-periodic) Fourier transform (scaled by $2^{-m}$) of $\phi$ truncated on $\RR^m\setminus\BR$.
By definition, $\w{\phi}_\ell$ are the inverse DFT of the finite samples $\h{\phi}(k)$ in the frequency domain.
Consider the interpolation
\begin{equation}
	\w{\phi}(y) = \sum_{k\in\IN} \h{\phi}(k)\,e^{i\pi k\cdot y} .
\end{equation}
Then, $\w{\phi}_\ell=\w{\phi}(\yl)$ and (\ref{eqn:quad:F}) is simply the trapezoidal rule applied to the product of the two functions, $\w{\phi}$ and $F$.
Thus, the procedure is equivalent to the regularization of the singular $\phi$ such that the regularized kernel $\w{\phi}$ results in the exact integral
by the trapezoidal rule if $F$ can be exactly represented on the given grid. 

Care should be taken not to confuse $\h{\phi}$ with the Fourier transform of the original \emph{non-truncated} $\phi$.
To be more precise, we should use a notation like $\h{\phi}_R$, but we choose the notational simplicity.
Typically, $\phi$ is a slowly decaying function with the point singularity at the origin, hence, without the truncation,
its Fourier transform possesses the same nature in the frequency domain.
By truncating in the space, we regularize the Fourier transform to a smooth (but still slowly decaying) $\h{\phi}_R$.
By truncating in the frequency domain, that is, by sampling only up to the given sampling frequency, we obtain $\w{\phi}$ regularized in the space.

As one can notice from the above derivation, our construction requires the exact evaluation of $\h{\phi}$.
In \S\ref{sec:phi}, we present a detailed discussion on the nature of $\h{\phi}$
as well as the explicit formulae for the logarithmic and the power-law singularities.

\subsection{The corrected trapezoidal rule}\label{sec:therule}
Merging (\ref{eqn:quad:G}) and (\ref{eqn:quad:F}), the quadrature rule can be written as
\begin{equation}
\label{eqn:quad:orig}
	I(f) \sim \frac{|\chi|}{\Nb} \sum_{\ell\in\IN}
	\left( \alpha(\yl) \left( \vphi(\rl)\,\w{\phi}_\ell + \vphic(\rl)\,\phi(\rl) \right) + \KK(\yl) \right)
	f(\yl) .
\end{equation}
For $\ell=0$, since $\vphi(0)=1$ and $\vphic(0)=0$, the weight becomes
\begin{equation}
	\alpha(0) \, \w{\phi}_0 + \KK(0) .
\end{equation}
For $\ell\ne0$, we can rewrite
\begin{equation}
	\vphi(\rl)\,\w{\phi}_\ell + \vphic(\rl)\,\phi(\rl) = \left( \w{\phi}_\ell - \phi(\rl) \right) \vphi(\rl) + \phi(\rl) ,
\end{equation}
Thus, we present the quadrature rule by the sum of the trapezoidal rule and the correction rule; that is,
\begin{equation}
\label{eqn:quad:sum}
	I(f) \sim \frac{|\chi|}{\Nb} \sum_{\substack{\ell\in\IN\\ \ell\ne0}} K(\yl) \, f(\yl)
	+ \frac{|\chi|}{\Nb} \sum_{\substack{\ell\in\IN\\ \rl<R}} w_\ell \, f(\yl)
\end{equation}
where the correction weights $w_\ell$ are given by
\begin{equation}
\label{eqn:quad:wgt}
	w_\ell = 
	\begin{cases}
		\alpha(0)\,\w{\phi}_0 + \KK(0) & \ell = 0 \\
		\alpha(\yl) \left( \w{\phi}_\ell - \phi(\rl) \right) \vphi(\rl) & \ell\ne0
	\end{cases} .
\end{equation}

The separated representation (\ref{eqn:quad:sum},\ref{eqn:quad:wgt}) is the final form of the quadrature rule we present in this paper.
The quadrature weights (\ref{eqn:quad:wgt}) is quite self-explanatory;
at the point of singularity, $\phi$ has been represented by the equivalent finite weight $\w{\phi}_0$
which accompanies the balancing neighbors $(\w{\phi}_\ell - \phi(\rl))\vphi(\rl)$ to achieve the desired spectral accuracy.
The advantage of using the separated form (\ref{eqn:quad:sum},\ref{eqn:quad:wgt}) over the primitive one (\ref{eqn:quad:orig}) is obvious;
oftentimes, the smooth remainder term $\KK$ has a quite complicated form.
Since the separated form requires only the limiting value of $\KK$ at the origin, the implementation can be simpler and more readable.

There must be a few careful readers concerning the numerical soundness of the expression $(\w{\phi}_\ell - \phi(\rl))$.
With finite precision arithmetics, it sometimes comes to a catastrophic end to take the difference of two potentially large values;
the cancellation error can corrupt the result.
However, on uniform grids where grid points are not clustered, the issue is not so significant.
Moreover, for spectral methods like ours, when applied to quite smooth data,
the solution often converges to the desired precision before the grid spacing becomes small enough to bring such an issue to the surface.
In all the numerical experiments we conducted including all the examples in this paper,
we utilized the separated representation but have never experience a trouble.
Actually, the classical Nystr\"om methods frequently use the expression $\KK=K-\alpha\,\phi$ to avoid the explicit evaluation of a complicated $\KK$
(see, for example, \cite[\S3.5]{CoKr98}).
For rare cases when the issue becomes significant, one can utilize the original form (\ref{eqn:quad:orig}) for grid points close to the singularity.

We can also bring the convolution (\ref{eqn:quad:sum}) to the frequency domain.
In this sense, our construction is equivalent to the representation of the Fourier transform of $K$ truncated on $\RR^m\setminus U$ by
\begin{equation}
\label{eqn:K:ft}
	\h{K} = \h{\alpha}*\h{\phi}*\h{\vphi} + \left( \h{\alpha}*\h{\phi}*\h{\vphic} + \h{\w{K}} \right)
\end{equation}
where the terms in the parentheses decays rapidly.
The key idea is that the application of the radial cut-off function $\vphi$ enables us to truncate $\phi$ outside of $\BR$ and
we can evaluate the Fourier transform (which is now the one-dimensional Hankel transform) of the truncated $\phi$ exactly.
Numerically, we evaluate $\h{K}$ by the DFT of $\{w_0,w_\ell+K(\yl)\}$, where $w_\ell$ are obtained by the inverse DFT of $\h{\phi}(k)$.

Now, the true meaning of the \emph{grid} we used so far has become clearer.
It is not necessary for the data $f$ to be given on that (or on any) grid.
Regarding $f$, the only information we need is the required minimum sampling frequency.
The grid we have used so far is in principle to perform the DFT of $\w{K}$ and the convolutions (by multiplications in the spatial domain) in (\ref{eqn:K:ft}),
hence, we call it more specifically the construction grid.

\begin{enumerate}[(A)]
\item A recommended sampling frequency for the construction (which is greater or equal to the given sampling frequency of the data)
	is such that the interpolation errors of the smooth functions $\alpha$, $\vphi$, and $\w{K}$ are comparable to that of the data.
	Otherwise, the quadrature weights will still work but the error in the weights will be the dominating factor (cf. \S\ref{sec:conv:II} and \S\ref{sec:conv:II}).
\sep
\item For the inverse DFT to obtain $\w{\phi}$, we do not need to use the entire construction grid.
	Only a subset of the grid containing $\BR$ is sufficient.
	The use of the subset can be very useful if the kernel exhibits exponential decay
	like the Helmholtz kernel with a complex wavenumber $k$ with $\operatorname{Im}(k)>1$.
	Such an exponentially decaying kernel requires a very high sampling frequency for the construction (cf. \S\ref{sec:conv:II}).
	However, due to the rapid decay, the kernel outside of a certain small $\BR$ is practically zero.
	Hence, we can compute $(w_\ell + \w{K}(\yl))$ only within the small subset of the conceptually huge construction grid.
	The DFT on the entire grid can be performed by padding zeros and the unnecessary high frequency terms can be discarded,
	both of which can be done in a single efficient procedure without using huge temporary memory by (so called) the partial FFT.
\end{enumerate}

\subsection{The Fourier transform $\bm{\h{\phi}}$} \label{sec:phi}
The construction of the correction weights requires the exact evaluation of the Fourier transform $\h{\phi}$ of the truncated singularity.
We begin with the definition of $\h{\phi}$,
\begin{equation}
	\h{\phi}(k) = \frac{1}{2^m} \int_{\BR} \phi(r) \, e^{-i\pi k\cdot y} \,dy .
\end{equation}
Note that $\phi$ is not a radial function in the computational domain $U$ and $\BR$ is not a ball in $U$.
First, we perform the change of variables $\eta=R^{-1} \chi y$ back to a scaled physical domain.
Utilizing $k\cdot y=k'\cdot\eta$ with $k'=R \chi^{-T} k$, we obtain
\begin{equation}
	\h{\phi}(k) = \frac{R^m}{2^m|\chi|} \int_{\norm{\eta}\le1} \phi\left(R\norm{\eta}\right) \, e^{-i\pi k'\cdot\eta} \, d\eta .
\end{equation}
The function $\phi(R\norm{\cdot})$ is radial and supported on the unit ball.
Hence, the integral on the right side is a radial function of $k'$ and is given by the following Hankel transform on one-dimension.
\begin{equation}
\label{eqn:phi:J}
	\h{\phi}(k)  = \frac{R^m}{2^m|\chi|} \frac{(2\pi)^{m/2}}{\rk^m}
	\int_0^{\rk} \phi\left( \frac{R}{\rk} t \right) \, t^{m/2} \, J_{(m-2)/2} (t) \,dt
\end{equation}
where $J_\mu$ is the Bessel function of the first kind and
\begin{equation}
	\rk = \pi R \norm{\chi^{-T} k} .
\end{equation}

Then, we can rewrite ($\ref{eqn:phi:J}$) as
\begin{equation}
\label{eqn:phi:A}
	\h{\phi}(\rho) = \frac{\sqrt{\pi}^m}{2^m|\chi|\,\Gamma(m/2+1)} \frac{R^m}{\rho^m} \int_0^\rho \phi\left(\frac{R}{\rho}t\right) \, m \, t^{m-1} \, A_{m}(t) \,dt ,
\end{equation}
where the Bessel function of degree $(m-2)/2$ has been replaced with a better behaved function $A_m$ defined by
\begin{equation}
	A_{m}(t) \equiv \Gamma(m/2) \frac{J_{(m-2)/2}(t)}{(t/2)^{(m-2)/2}} .
\end{equation}
Then, $A_m(0)=1$ and $|A_m|\le1$.
The series representation of $A_m$ can be obtained from that of $J_{(m-2)/2}$.
\begin{equation}
	A_{m}(t) = \sum_{\ell=0}^\infty \frac{(-1)^\ell \Gamma(m/2)}{\ell!\,\Gamma(\ell+m/2)} \left(\frac{t}{2}\right)^{2\ell}
	.
\end{equation}
Thus, $A_m$ is an entire function on $\CC$ and is even on $\RR$.
The asymptotic behavior of $A_m$, which can be obtained from that of $J_{(m-2)/2}$, governs the  decay characteristics of $\h{\phi}$.
For $t \gg m^2$,
\begin{equation}
\label{eqn:Am:asy}
	A_{m}(t) \sim \frac{\Gamma(m/2)}{\sqrt{\pi}} \frac{\cos(t-(m-1)\pi/4)}{(t/2)^{(m-1)/2}} .
\end{equation}

Note that $A_m$ includes a Bessel function with an integer index for an even $m$ and with a half-integer index for an odd $m$,
the latter of which can be expressed by a finite series of trigonometric functions.
Although the series representation may look complicated, $A_m$ actually consists of familiar functions.
For the first 4 indices,
\begin{align}
	A_1(t) &= \cos(t) \\
	A_2(t) &= J_0(t) \\
	A_3(t) &= \sin(t)/t \equiv \sinc(t) \\
	A_4(t) &= 2 J_1(t)/t
	.
\end{align}
For indices greater than 4, closed-forms can be obtained by utilizing the three-term recurrence relation,
\begin{equation}
\label{eqn:Am:rec}
	A_{m+4}(t) = \frac{m(m+2)}{t^2} \left( A_{m+2}(t) - A_{m}(t) \right) ,
\end{equation}
which can be easily verified from the series form.
Thus, $A_m$ involves only $J_0$ and $J_1$ for an even $m$, and only cosine and sine for an odd $m$,
which does not introduce any implementation issue.

Another useful identity is
\begin{equation}
\label{eqn:Am:dif}
	\left( t^m A_{m+2}(t) \right)' = m \, t^{m-1} A_{m}(t) ,
\end{equation}
which results in another expression for (\ref{eqn:phi:A});
suppose $\phi$ is $C^1$ on $(0,R\,]$ and $(\phi(t)\,t^{m-\epsilon})\rightarrow0$ as $t\rightarrow0$ for some $\epsilon>0$.
Applying the integration-by-part, (\ref{eqn:phi:A}) becomes
\begin{equation}
\label{eqn:phi:IBP}
	\h{\phi}(\rho) = \frac{\sqrt{\pi}^m R^m}{2^m|\chi|\,\Gamma(m/2+1)}
	\bigg\{ \phi(R) A_{m+2}(\rho) - \int_0^1 \phi_1(R\,t) \, t^{m-1} \, A_{m+2}(\rho\,t) \,dt \bigg\} ,
\end{equation}
where $\phi_1(t) = t\,\phi'(t)$ and the change of variables $t/\rho\mapsto t$ is applied for the second integral.
Note that $\phi_1(t)=1$ for $\phi(t)=\log(t)$ and $\phi_1(t)=-\nu\,\phi(t)$ for $\phi(t)=t^{-\nu}$,
which provides us with useful recurrence relations for the integral term.
The formula (\ref{eqn:phi:IBP}) also reveals the fact that the asymptotic behavior of $\h{\phi}$ is determined by that of $A_{m+2}$ (not $A_m$),
which is summarized in the following lemma.

\begin{lemma}
\label{lem:phi}
Let the grid parameters, $R$ and $\chi$, be fixed.
Suppose $\phi$ is $C^1$ on $(0,R\,]$ and $(\phi(t)\,t^{m-\epsilon})\rightarrow0$ as $t\rightarrow0$ for some $\epsilon>0$.
Then, for $k\in\Zm$,
\begin{equation}
	| \h{\phi}(k) | = O\left(\norm{k}^{-(m+1)/2}\right) .
\end{equation}
\end{lemma}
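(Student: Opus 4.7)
The plan is to start from the integration-by-parts formula (\ref{eqn:phi:IBP}), which was derived just before the lemma statement, and read off the decay from the asymptotic behavior of $A_{m+2}$.  Since $R$ and $\chi$ are fixed, the prefactor $R^m/(2^m|\chi|\,\Gamma(m/2+1))$ is a harmless constant, and since $\rho=\pi R\norm{\chi^{-T}k}$ is comparable to $\norm{k}$ (up to a constant depending only on $\chi$), it suffices to prove $|\h\phi(\rho)|=O(\rho^{-(m+1)/2})$ as $\rho\to\infty$.

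First I would establish a global envelope $|A_{m+2}(u)|\le C_m (1+u)^{-(m+1)/2}$ for all $u\ge 0$: the large-$u$ part is exactly the asymptotic (\ref{eqn:Am:asy}) applied with $m$ replaced by $m+2$, and the small-$u$ part follows from $A_{m+2}$ being entire with $A_{m+2}(0)=1$ and $|A_{m+2}|\le 1$. Applying this envelope to the boundary term of (\ref{eqn:phi:IBP}) gives $\phi(R)\,A_{m+2}(\rho)=O(\rho^{-(m+1)/2})$ at once.

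Next I would attack the integral term. After the substitution $s=\rho t$,
\begin{equation*}
\int_0^1 \phi_1(Rt)\,t^{m-1}\,A_{m+2}(\rho t)\,dt
= \rho^{-m}\int_0^\rho \phi_1(Rs/\rho)\,s^{m-1}\,A_{m+2}(s)\,ds.
\end{equation*}
Using the envelope gives an integrand bounded by $|\phi_1(Rs/\rho)|\,s^{m-1}(1+s)^{-(m+1)/2}$. The hypothesis $\phi(t)\,t^{m-\epsilon}\to 0$ guarantees integrability near $0$ and, after accounting for the $\phi_1$ factor, shows that the integral grows at most like $\rho^{(m-1)/2}$ for large $\rho$; multiplying by the prefactor $\rho^{-m}$ yields the desired $O(\rho^{-(m+1)/2})$.

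The main obstacle is making the last step uniform over the class of admissible $\phi$. For $\phi=\log r$ one has $\phi_1\equiv 1$ and the growth estimate is immediate from $\int_1^\rho s^{(m-3)/2}\,ds=O(\rho^{(m-1)/2})$. For $\phi=r^{-\nu}$ one gets $\phi_1(Rs/\rho)=-\nu R^{-\nu}(s/\rho)^{-\nu}$, and the powers have to be tracked carefully; in the delicate regime one may need to trade the envelope for one further integration by parts using the identity $t\,A_{m+2}(t)=-m\,A_m'(t)$ built into (\ref{eqn:Am:dif}), exploiting the oscillation of $A_{m+2}$ rather than only its size, so that the boundary contribution at $t=R$ still decays at rate $\rho^{-(m+1)/2}$ and the remainder is better. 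In all cases the outcome is the same, and combining the bounds on the boundary and integral terms in (\ref{eqn:phi:IBP}) yields $|\h\phi(\rho)|=O(\rho^{-(m+1)/2})$, which translates to $|\h\phi(k)|=O(\norm{k}^{-(m+1)/2})$ as claimed.
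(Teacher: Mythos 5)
Your route is the same as the paper's: start from (\ref{eqn:phi:IBP}), dispose of the boundary term via the asymptotics (\ref{eqn:Am:asy}) of $A_{m+2}$, and estimate the integral term by putting absolute values under a decay envelope for $A_{m+2}$ (the paper's splitting of $[0,1]$ at $a\sim C^{2/(m+1)}/\rho$ is your envelope $|A_{m+2}(u)|\le C(1+u)^{-(m+1)/2}$ in disguise). The boundary term, and the integral term whenever $\epsilon$ can be taken $\ge(m+1)/2$, are handled correctly.

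The genuine gap is the ``delicate regime'' that you flag and then wave away. Assuming $\phi_1(t)\,t^{m-\epsilon}$ is bounded (which is what the paper actually uses --- note the stated hypothesis controls $\phi$, not $\phi_1$), your substituted integral is at most a constant times $\rho^{m-\epsilon}\int_0^\rho s^{\epsilon-1}(1+s)^{-(m+1)/2}\,ds$; when $\epsilon<(m+1)/2$ the last integral converges, so after multiplying by $\rho^{-m}$ you obtain only $O(\rho^{-\epsilon})$, not $O(\rho^{-(m+1)/2})$. More importantly, the repair you sketch --- a further integration by parts to exploit the oscillation of $A_{m+2}$ --- cannot close this gap, because the shortfall is not oscillatory: for $\phi=r^{-\nu}$ with $(m-1)/2<\nu<m$ the integral $\int_0^\infty s^{m-\nu-1}A_m(s)\,ds$ converges to a \emph{nonzero} constant (a Weber--Schafheitlin/Mellin evaluation), so $\h{\phi}(\rho)=c\,\rho^{-(m-\nu)}+O(\rho^{-(m+1)/2})$ with $c\ne0$. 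Already for $m=1$ and $\phi=r^{-1/2}$ the Fresnel integral gives $\h{\phi}(\rho)\sim\sqrt{\pi/2}\,\rho^{-1/2}$ rather than $O(\rho^{-1})$. So ``in all cases the outcome is the same'' is not merely unproved; it is false, and the stated rate is attainable only when $\epsilon$ may be taken $\ge(m+1)/2$. You are in good company: the paper's own proof disposes of the case $\epsilon<(m+1)/2$ by choosing $\delta=\epsilon/(\epsilon-(m+1)/2)$, which is negative there and hence not an admissible constant $\ge1$. The conclusion your estimates (and the paper's) actually support is $|\h{\phi}(k)|=O\big(\norm{k}^{-\min(\epsilon,(m+1)/2)}\big)$, possibly with a logarithmic factor at the crossover.
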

\begin{proof}
First, consider the integral in (\ref{eqn:phi:IBP}).
\begin{equation}
	\left|\int_0^1 \phi_1(R\,t)\,t^{m-1}\,A_{m+2}(\rho\,t)\,dt\right| \le
	\frac{\norm{\phi_1(t)\,t^m}_\infty}{R^m} \int_0^1 \frac{ \left| A_{m+2}(\rho\,t) \right| }{t^{1-\epsilon}} \,dt
\end{equation}
where we denote by $\norm{\cdot}_\infty$ the $L^\infty$-norm on $[\,0,R\,]$.
From (\ref{eqn:Am:asy}), there is a constant $C>0$ such that $|A_{m+1}(\rho\,t)| \le C(\rho\,t)^{-(m+1)/2}$.
Define $a$ by $C=(\rho\,a)^{(m+1)/2}$. Then, $a<1$ if $\rho>C^{2/(m+1)}$.
Since $|A_{m+2}|\le 1$,
\begin{equation}
\begin{split}
	\int_0^1 \frac{ \left| A_{m+2}(\rho\,t) \right| }{t^{1-\epsilon}} \,dt
	&\le \int_0^a t^{\epsilon-1} \, dt + C\,\rho^{-(m+1)/2} \int_a^1 t^{\epsilon-(m+1)/2-1} \, dt \\
	&\le \delta \frac{a^\epsilon}{\epsilon} + \frac{C\,\rho^{-(m+1)/2}}{\epsilon-(m+1)/2} \left( 1 - a^{\epsilon-(m+1)/2} \right) \\
	&= \frac{C\,\rho^{-(m+1)/2}}{\epsilon-(m+1)/2} + a^\epsilon \left( \frac{\delta}{\epsilon} - \frac{1}{\epsilon-(m+1)/2} \right) ,
\end{split}
\end{equation}
where $\delta$ is an arbitrary constant $\ge1$.
When $\epsilon\ge(m+1)/2$, $a^\epsilon=O\left(\rho^{-(m+1)/2}\right)$  since $a=O\left(\rho^{-1}\right)$.
When $\epsilon<(m+1)/2$, we can choose $\delta=\epsilon/(\epsilon-(m+1)/2)>1$ so that the second term vanishes.
Thus, $\int_0^1 t^{\epsilon-1} |A_{m+2}(\rho\,t)|\,dt = O\left(\rho^{-(m+1)/2}\right)$.
Combined (\ref{eqn:Am:asy}) for the first term in (\ref{eqn:phi:IBP}), the above result shows that $|\h{\phi}(\rho)|=O\left(\rho^{-(m+1)/2}\right)$.
Since $\rk\le\pi R \norm{\chi^{-T}} \norm{k}$ where $\norm{\chi^{-T}}$ is the operator norm of $\chi^{-T}$, the same relation holds for $\norm{k}$ also.
\end{proof}

In what follows, we present the formulae of $\h{\phi}$ for $\phi(r)=\log(r)$ and  for $\phi(r)=r^{-\nu}$ with $\nu<m$,
which enables us to evaluate them up to the machine precision.
We begin with the logarithmic singularity.

\subsubsection{$\bm{\phi(r)=\log(r)}$}
From the integration-by-parts formula (\ref{eqn:phi:IBP}), we obtain
\begin{equation}
	\h{\phi}^L_m(\rho) = \frac{\sqrt{\pi}^m R^m}{2^m|\chi|\Gamma(m/2+1)}
	\bigg\{ \log(R) A_{m+2}(\rho) - L_{m}(\rho) \bigg\}
\end{equation}
where
\begin{equation}
	L_{m}(\rho) \equiv  \int_0^1 t^{m-1} \, A_{m+2}(\rho\,t) \,dt
	.
\end{equation}
For $m=1$ and 2,
\begin{align}
	L_1(\rho) &= \int_0^1 \frac{\sin(\rho\,t)}{\rho\,t} \,dt = \frac{\Si(\rho)}{\rho} \\
	L_2(\rho) &= \int_0^1 \frac{2 J_1(\rho\,t)}{\rho} \,dt = \frac{2(1-J_0(\rho))}{\rho^2} .
\end{align}
For $m\ge3$, utilize the recurrence relations (\ref{eqn:Am:rec}) to obtain
\begin{equation}
	L_{m+2}(\rho) =  \frac{m(m+2)}{\rho^2} \int_0^1 t^{m-1} \left( A_{m+2}(\rho\,t) - A_{m}(\rho\,t) \right) dt
\end{equation}
where the second term can be integrated by using (\ref{eqn:Am:dif}),
which results in the recurrence relation for $L_m$,
\begin{equation}
	L_{m+2}(\rho) = \frac{(m+2)}{\rho^2} \bigg\{ m \, L_{m}(\rho) - A_{m+2}(\rho) \bigg\} .
\end{equation}
With the formulae for $L_1$ and $L_2$, the above recurrence relation enables us to construct an explicit formula of $\h{\phi}^L_m$ for any $m$.
In order to avoid possible cancellation error of the recurrence formula for small $\rho$, we need to evaluate the series representation,
\begin{equation}
	L_m(\rho) = \sum_{\ell=0}^\infty \frac{(-1)^\ell \Gamma(m/2+1)}{\ell!\,\Gamma(\ell+m/2+1)(m+2\ell)} \bigg(\frac{\rho}{2}\bigg)^{2\ell} ,
\end{equation}
which is also more efficient for small $\rho$.

\subsubsection{$\bm{\phi(r)=r^{-\nu}\,\,(\nu<m)}$}
First, notice that $r^{-\nu}$ with any $\nu<m$ can be factored as $r^{-\nu}=r^{2n}\cdot r^{-\epsilon}$ with a non-negative integer $n$ and $\epsilon\in[m-2,m)$.
Since $r^2$ is smooth, the smooth factor $r^{2n}$ can be included in $\alpha$.
Interestingly, due to the factor $r^{2n}$ in $\alpha$, the correction weight $w_0$ at origin vanishes for $r^{-\nu}$ with $\nu<m-2$.
For such \emph{weaker} singularities, the correction is made by the correction weights (with relatively smaller magnitude) near the location of the singularity.

Therefore, from now on, we presume $\nu\in[m-2,m)$.
It is convenient for the presentation to use a new notation, $\mu\equiv m-\nu$, where $\mu\in(0,2\,]$.
Then, (\ref{eqn:phi:A}) is written in terms of $\mu$ as,
\begin{equation}
	\h{\phi}^{(\mu)}_m(\rho) = \frac{\sqrt{\pi}^m R^\mu}{2^m|\chi|\,\Gamma(m/2+1)} \, M^{(\mu)}_m(\rho)
\end{equation}
where
\begin{equation}
	M^{(\mu)}_m(\rho) \equiv \int_0^1 m \, t^{\mu-1} \, A_{m}(\rho\,t) \,dt .
\end{equation}
The series representation is given by
\begin{equation}
	M^{(\mu)}_m(\rho) = \sum_{\ell=0}^\infty \frac{(-1)^\ell \Gamma(m/2)\,m}{\ell!\,\Gamma(\ell+m/2)(\mu+2\ell)} \bigg(\frac{\rho}{2}\bigg)^{2\ell} ,
\end{equation}
which can be evaluated efficiently for small $\rho$ avoiding the cancellation error of the recurrence relation.

We can derive a recurrence relation for $M_m$ like the one for $L_m$ in the previous section.
By integrating by parts and utilizing $t\,A'_{m+2}(t)=m(A_m(t)-A_{m+2}(t))$,
\begin{equation}
\begin{split}
	\mu\,M^{(\mu)}_{m+2}(\rho)
	&= (m+2)\,A_{m+2}(\rho) - (m+2) \int_0^1 \rho \, t^\mu \, A'_{m+2}(\rho\,t) \, dt \\
	&= (m+2)\,A_{m+2}(\rho) - (m+2)\,M^{(\mu)}_m(\rho) + m\,M^{(\mu)}_{m+2}(\rho) .
\end{split}
\end{equation}
Hence,
\begin{equation}
\label{eqn:Mm:rec:pre}
	(m-\mu)\,M^{(\mu)}_{m+2}(\rho) = (m+2) \big( A_{m+2}(\rho) - \,M^{(\mu)}_m(\rho) \big) .
\end{equation}
And, the above equation implies that $\,M^{(m)}_m(\rho)=A_{m+2}(\rho)$.
Since $\mu\in(0,2\,]$, there are only two cases with $\mu=m$: $\mu=m=1$ and $\mu=m=2$.
Except those two cases, we can utilize the recurrence relation,
\begin{equation}
\label{eqn:Mm:rec}
	M^{(\mu)}_{m+2}(\rho) = \frac{m+2}{m-\mu} \bigg( A_{m+2}(\rho) - \,M^{(\mu)}_m(\rho) \bigg) .
\end{equation}

Among $\mu\in(0,2\,]$, the two integer cases are of prime interest;
(1) $r^{2-m}$ ($\mu=2$) is the principal singularity of the Helmholtz kernel in $\RR^m$
(in even dimensions with $m\ge4$, the Helmholtz kernel contains an additional logarithmic singularity,
which can be treated by $\h{\phi}^L_m$ in the previous section).
Interestingly, this (probably) most important class of singularities has the simplest description;
$M^{(2)}_m$ can be written explicitly without using the recurrence relation.
(2) Singularities with $\mu=1$ arise when the Helmholtz kernel in $\RR^{m+1}$ is acting on $m$-dimensional flat boundary.
For $m=1$, the domain of integral need not be a flat manifold (see an example in \S\ref{sec:bie} for the application of the quadrature rule on curves).
We are studying the extension of our method for higher dimensional general (non-flat) manifolds.

\sep
\begin{enumerate}[(I)]

\item $\bm{\mu=2 \quad(\nu=m-2)}.$
\begin{equation}
	M^{(2)}_m(\rho) = \int_0^1 m \, t \, A_{m}(\rho\,t) \,dt
\end{equation}
The evaluation of $M^{(2)}_1$ is straightforward.
The formula for $M^{(2)}_2$ is the result of (\ref{eqn:Mm:rec:pre}) with $\mu=m$.
For $m>2$, it is not difficult to obtain the formula from the series representation of $A_m$.
\begin{align}
	M^{(2)}_1 &= \frac{\cos(\rho)-1}{\rho^2} - \sinc(\rho) \\
	M^{(2)}_2 &= A_4(\rho) \\
	M^{(2)}_m &= \frac{m(m-2)}{\rho^2} \bigg( 1 - A_{m-2}(\rho) \bigg) \quad\text{for $m>2$}
\end{align}

\item $\bm{\mu=1 \quad(\nu=m-1)}.$
\begin{equation}
	M^{(1)}_m(\rho) = \int_0^1 m \, A_{m}(\rho\,t) \,dt
\end{equation}
Formulae for $M^{(1)}_1$ and $M^{(1)}_3$ requires elementary calculus only.
To the best of our knowledge, there is not a simple representation of $M^{(1)}_2$ by well-known functions.
Hence, we treat the following integral form of $M^{(1)}_2$ as the definition of a special function
(see \cite[p. 480]{AbSt72} for the properties of the integral).
\begin{align}
	M^{(1)}_1(\rho) &= A_3(\rho) \\
	M^{(1)}_2(\rho) &= 2 \int_0^1 J_0(\rho\,t) \,dt = \frac{2}{\rho} \int_0^\rho J_0(t) \,dt \\
	M^{(1)}_3(\rho) &= \frac{3 \Si(\rho)}{\rho}
\end{align}
One may implement his/her own version of $M^{(1)}_2$ from the series representation $J_0$ and the asymptotic expansion of the integral,
or can simply use an implementation of $\int_0^\rho J_0(t)\,dt$ in Algorithm 757 (MISCFUN) of ACM Transactions on Mathematical Software (TOMS) (cf. \cite{Ma96}).
For $m>1$, $M^{(1)}_{m+2}$ can be obtained from the recurrence relation,
\begin{equation}
	M^{(1)}_{m+2}(\rho) = \frac{m+2}{m-1} \bigg( M^{(1)}_{m}(\rho) - A_{m+2}(\rho) \bigg) .
\end{equation}

\item $\bm{\mu\in(0,1)\cup(1,2)}.$\\
The recurrence relation (\ref{eqn:Mm:rec}) can be applied to obtain $M^{(\mu)}_{m+2}$ for any $m>1$.
Hence, we only need to consider two initial cases $m=1, 2$.
\sep
\begin{enumerate}[(a)]
\item $m=1.$
\begin{equation}
\begin{split}
	M^{(\mu)}_1(\rho) &= \int_0^1 t^{\mu-1} \cos(\rho\,t) \,dt = \frac{\Ci(\mu,\rho)}{\rho^\mu} \\
	&= \sum_{\ell=0}^\infty \frac{(-1)^\ell \rho^{2\ell}}{(2\ell+1)!(2\ell+\mu)} .
\end{split}
\end{equation}
The function $\Ci(\mu,\rho)\equiv\int_0^\rho t^{\mu-1}\cos(t)\,dt$ is known as the generalized cosine integral,
which is related to the lower incomplete gamma function with pure imaginary argument,
\begin{equation}
	\Ci(\mu,\rho) = \operatorname{Re}\big((-i)^\mu\,\gamma(\mu,i\rho)\big) .
\end{equation}

\item $m=2.$
\begin{equation}
	M^{(\mu)}_2(\rho) = \int_0^1 t^{\mu-1} J_0(\rho\,t) \,dt
	= \sum_{\ell=0}^\infty \frac{(-1)^\ell (\rho/2)^{2\ell}}{(\ell!)^2(2\ell+\mu)} .
\end{equation}
\end{enumerate}

To the best our knowledge, there is no available/reliable implementation of either $M^{(\mu)}_1$ or $M^{(\mu)}_2$
(or any special function which can be used to compute them) in the public domain.
We can follow the standard implementation procedure of special functions
-- the partial sums of the above power series' for small $\rho$ and the asymptotic expansions for large $\rho$.
For $M^{(\mu)}_1$, the error of the 18-term partial sum is less than $10^{-16}$ on $0\le\rho\le2\pi$.
For $M^{(\mu)}_2$, 20 terms are enough for the error less than $10^{-16}$ on $0\le\rho\le7.01558666981561875$
(the upper limit is the second positive zero of $J_1$).
For $\rho>14\pi$ for $M^{(\mu)}_1$ and $\rho>44.7593189976528217$ (the 14th positive zero of $J_1$),
asymptotic expansions described in Appendix \ref{sec:M1:asy} and \ref{sec:M2:asy} produce 16-digit accurate results.
For $\rho$ in the intermediate range, we divide the domain as $\cup_{n=1}^6[a_n,a_{n+1}]$
where $a_n=2\pi n$ for $M^{(\mu)}_1$ and $a_n$ is the (2n)th positive zero of $J_1$.
When $\rho\in[a_n,a_{n+1}]$ for some $n=1,\ldots,6$,
\begin{equation}
	M^{(\mu)}_m(\rho) = M^{(\mu)}_m(a_n)
	+ \frac{1}{\rho^\mu} \int_{a_n}^\rho t^{\mu-1} \begin{cases} \cos(t) \,dt & m = 1 \\ J_0(t) \,dt & m = 2 \end{cases}
\end{equation}
where $M^{(\mu)}_m(a_n)$ can be precomputed and reused, and the integrals on $[a_n,\rho\,]$ are evaluated at each time by a quadrature rule.
Since the integrands are very smooth and oscillate less than one cycle in the interval,
any high order numerical quadrature (such as Clenshaw-Curtis) with a small number of samples can compute the result up to the machine precision.

\end{enumerate}

Thus, we have presented our scheme for the construction of corrected quadrature weights
and the required formulae for the Fourier transforms of logarithmic and power-law singularities.
We conclude this section by presenting the factored form of the Helmholtz kernel in arbitrary dimension,
for which the results we have developed so far turn out to be well-suited.

\subsection{Helmholtz kernels}
\label{sec:helmholtz}
Denote by $K^k_n(r)$ the Helmholtz kernel in $\RR^n$ with complex wavenumber $k$ such that $\operatorname{Im}(k)\ge0$.
The domain of the integral (i.e. the domain of the convolution) is not necessarily $n$-dimensional.
If the convolution is performed on $\RR^m$, $n$ needs only to satisfy $n\le m+1$.
Let $z\equiv kr$ and $\nu\equiv(n-2)/2$, then the Helmholtz kernels are given by
\begin{equation}
\label{eqn:Kkn}
	K^k_n(r) = \frac{i}{4} \left( \frac{k}{2\pi r} \right)^\nu H^{(1)}_\nu(z)
	= \frac{1}{4} \left(\frac{k}{2\sqrt{\pi}}\right)^{n-2} \left( -\frac{Y_\nu(z)}{(z/2)^\nu} + i \frac{J_\nu(z)}{(z/2)^\nu} \right)
\end{equation}
where $H^{(1)}_\nu$ is the Hankel function of the first kind.
More familiar forms in the first three dimensions are
\begin{equation}
	K^k_1(r) = \frac{i e^{ikr}}{2k} ,\quad
	K^k_2(r) = \frac{i}{4} H^{(1)}_0(kr) ,\quad
	K^k_3(r) = \frac{e^{ikr}}{4\pi r} .
\end{equation}
In the limiting case with $k=0$, $K^0_n$ is simply a constant multiple of the logarithmic or the power-law singularity;
\begin{equation}
\label{eqn:Kon}
\begin{aligned}
	K^0_1(r) &= -\frac{1}{2} r ,\\
	K^0_2(r) &= -\frac{1}{2\pi} \log(r) ,\\
	K^0_n(r) &= \frac{\Gamma\left(n/2-1\right)}{4\sqrt{\pi}^n} \frac{1}{r^{n-2}} \quad\text{for $n \ge 3$} .
\end{aligned}
\end{equation}

Recall that $J_\nu(z)/(z/2)^\nu=A_n(z)/\Gamma(n/2)$ with $A_n$ defined and extensively used in previous sections.
Like $A_n$, the imaginary part of $K^k_n$ is analytic, and has the limiting value
\begin{equation}
	\lim_{r\rightarrow0} \operatorname{Im}(K^k_n(r)) = \frac{i}{4\Gamma(n/2)} \left(\frac{k}{2\sqrt{\pi}}\right)^{n-2} .
\end{equation}
Thus, the singularity is carried entirely by the real part.
In principle, for any $n$, $\operatorname{Re}(K^k_n)$ contains the same type of singularity as $K^0_n$ given above.
However, for even $n>3$, the kernel contains an additional logarithmic singularity.

\subsubsection{Odd $\bm{n}$}
\label{sec:helmholtz:odd}
Since $\nu=(n-2)/2$ is an half-integer,
we can utilize the identity, $Y_\nu(z)=(-1)^{\lceil\nu\rceil} \, J_{-\nu}(z)$, to obtain
\begin{align}
	\frac{Y_\nu(z)}{(z/2)^\nu}
	&=(-1)^{\lceil\nu\rceil} \, \frac{J_{-\nu}(z)}{(z/2)^{-\nu}} \, \frac{1}{(z/2)^{2\nu}} \\
	&= -(-1)^{\lceil n/2\rceil} \left(\frac{2}{k}\right)^{n-2} \frac{A_{4-n}(kr)}{\Gamma(2-n/2)} \frac{1}{r^{n-2}} .
\end{align}
Therefore, the factored form is given by
\begin{equation}
	K^k_n(r) = \frac{\alpha^k_n(r)}{r^{n-2}} + \w{K}^k_n(r)
\end{equation}
with smooth functions,
\begin{align}
	\alpha^k_n(r) &= \alpha^k_n(0) \, A_{4-n}(kr)
	\quad\text{where}\quad \alpha^k_n(0) = \frac{(-1)^{\lceil n/2\rceil}}{4\,\Gamma(2-n/2)\,\sqrt{\pi}^{n-2}} \\
	\w{K}^k_n(r) &= \w{K}^k_n(0) \, A_n(kr)
	\quad\;\;\;\text{where}\quad \w{K}^k_n(0) = \frac{i}{4\,\Gamma(n/2)} \left(\frac{k}{2\sqrt{\pi}}\right)^{n-2} .
\end{align}
It is convenient to use the following recurrence relations.
\begin{equation}
	\alpha^k_{n+2}(0) = \frac{n-2}{2\pi} \, \alpha^k_n(0)
	\quad\text{and}\quad
	\w{K}^k_{n+2}(0) = \frac{k^2}{2 n \pi} \, \w{K}^k_n(0) .
\end{equation}
Recall that, in the separated form of the quadrature rule (\ref{eqn:quad:sum}),
we do not need to evaluate $\w{K}^k_n$ explicitly; only the limiting value at the origin is required.
However, the values of $\alpha^k_n$ at grid points are still needed.
The following table shows explicit formulae and values for the first few dimensions.
\sep
\begin{center}
\begin{tabular}{ccccc}
\toprule
	\rule[-7pt]{0pt}{1.8eM} \dm{n} & \dm{\phi(r)} & \dm{\alpha^k_n(r)} & \dm{\alpha^k_n(0)} & \dm{\w{K}^k_n(0)} \\
\midrule
	\rule[-7pt]{0pt}{1.8eM} 1 & \dm{r} & \dm{-\sinc(kr)/2} & \dm{-1/2} & \dm{i/(2k)}  \\
	\rule[-7pt]{0pt}{1.8eM} 3 & \dm{r^{-1}} & \dm{\cos(kr)/(4\pi)} & \dm{1/(4\pi)} & \dm{ik/(4\pi)} \\
	\rule[-7pt]{0pt}{1.8eM} 5 & \dm{r^{-3}} & \dm{\big(\cos(kr)+kr\sin(kr)\big)/(8\pi^2)} & \dm{1/(8\pi^2)} & \dm{ik^3/(24\pi^2)} \\
	\rule[-7pt]{0pt}{1.8eM} 7 & \dm{r^{-5}} & \dm{\big((3-(kr)^2)\cos(kr)+3kr\sin(kr)\big)/(16\pi^3)} & \dm{3/(16\pi^3)} & \dm{ik^5/(240\pi^3)} \\
\bottomrule
\end{tabular}
\end{center}

\subsubsection{Even $\bm{n}$}
\label{sec:helmholtz:even}
The Bessel function of the second kind with an integer index $\nu=(n-2)/2$ can written (cf. \cite[p. 358]{AbSt72}) as
\begin{equation}
	Y_\nu(z)
	= -\frac{1}{\pi} \left( \frac{z}{2} \right)^{-\nu} P_\nu(z)
	+ \frac{2}{\pi} \log\left(\frac{z}{2}\right) \, J_\nu(z)
	- \frac{1}{\pi\nu!} \left(\frac{z}{2}\right)^\nu Q_\nu(z)
\end{equation}
where
\begin{align}
	P_\nu(z) &\equiv \sum_{\ell=0}^{\nu-1} \frac{(\nu-1-\ell)!}{\ell!} \left(\frac{z}{2}\right)^{2\ell} \qquad\qquad\qquad \bigg(P_0(z)\equiv0\bigg) \\
	Q_\nu(z) &\equiv \nu! \sum_{\ell=0}^\infty (-1)^\ell \, \frac{h_\ell+h_{\nu+\ell}-2\gamma_e}{\ell!\,(\nu+\ell)!} \left(\frac{z}{2}\right)^{2\ell}
\end{align}
where $h_\ell\equiv\sum_{k=1}^\ell1/k$ ($h_0\equiv0$) and $\gamma_e$ is the Euler constant.
Thus, the kernel involves two types of singularities for $n\ge4$ and can be factored as
\begin{equation}
	K^k_n(r) = \frac{\alpha^k_n(r)}{r^{n-2}} + \beta^k_n(r)\log(r) + \w{K}^k_n(r)
\end{equation}
where
\begin{align}
	\alpha^k_n(r) &= \frac{P_\nu(kr)}{4\sqrt{\pi}^n} , \\
	\beta^k_n(r) &= \beta^k_n(0) \, A_n(kr)
	\quad\text{where}\quad
	\beta^k_n(0) = -\frac{1}{2\pi\Gamma(n/2)} \left(\frac{k}{2\sqrt{\pi}}\right)^{n-2} ,
\end{align}
and
\begin{equation}
	\w{K}^k_n(r) = \frac{1}{4\Gamma(n/2)} \left(\frac{k}{2\sqrt{\pi}}\right)^{n-2}
	\bigg\{ \frac{Q_\nu(kr)}{\pi} - \frac{2}{\pi} \log\left(\frac{k}{2}\right) A_n(kr) + i\,A_n(kr) \bigg\} .
\end{equation}

The limiting values of $\alpha^k_n$ and $\beta^k_n$ are given by
\begin{align}
	\alpha^k_n(0) &= \frac{(n-4)!}{4\sqrt{\pi}^n} ,\quad \alpha^k_2(0) = 0 \\
	\beta^k_{n+2}(0) &= \frac{k^2}{2n\pi} \beta^k_{n}(0) ,\quad \beta^k_2(0) = -\frac{1}{2\pi}
\end{align}
The following table summarizes formulae and values of $\alpha^k_n$ and $\beta^k_n$ for the first few even dimensions.
\begin{center}
\begin{tabular}{ccccc}
\toprule
	\rule[-7pt]{0pt}{1.8eM} \dm{n} & \dm{\alpha^k_n(r)} & \dm{\alpha^k_n(0)} & \dm{\beta^k_n(r)} & \dm{\beta^k_n(0)} \\
\midrule
	\rule[-7pt]{0pt}{1.8eM} 2 & 0 & 0 & \dm{-J_0(kr)/(2\pi)} & \dm{-1/(2\pi)} \\
	\rule[-7pt]{0pt}{1.8eM} 4 & \dm{1/(4\pi^2)} & \dm{1/(4\pi^2)} & \dm{-k J_1(kr)/(4\pi^2 r)} & \dm{-k^2/(8\pi^2)} \\
	\rule[-7pt]{0pt}{1.8eM} 6 & \dm{\big(1+(kr/2)^2\big)/(4\pi^3)} & \dm{1/(4\pi^3)} & \dm{-k^2 J_2(kr)/(8\pi^3 r^2)} & \dm{-k^4/(64\pi^3)} \\
	\rule[-7pt]{0pt}{1.8eM} 8 & \dm{\big(2+(kr/2)^2+(kr/2)^4/2\big)/(4\pi^4)} & \dm{1/(2\pi^4)} & \dm{-k^3 J_3(kr)/(16\pi^4 r^3)} & \dm{-k^6/(768\pi^4)} \\
\bottomrule
\end{tabular}
\end{center}

The value of $\w{K}^k_n$ at the origin is given by
\begin{equation}
	\w{K}^k_n(0) = \frac{1}{4\Gamma(n/2)} \left(\frac{k}{2\sqrt{\pi}}\right)^{n-2}
	\bigg\{ \frac{h_{(n-2)/2} -2\gamma_e}{\pi} - \frac{2}{\pi}\log\left(\frac{k}{2}\right) + i \bigg\} .
\end{equation}
Hence, $\w{K}^k_n(0)$ for any even $n>2$ can be generated by the recurrence relation
\begin{equation}
	\w{K}^k_{n+2}(0) = \frac{k^2}{2n\pi} \w{K}^k_{n}(0) + \frac{1}{n^2\Gamma(n/2)\pi} \left(\frac{k}{2\sqrt{\pi}}\right)^{n}
\end{equation}
from the initial value
\begin{equation}
	\w{K}^k_2(0) = \frac{i}{4} - \frac{\gamma_e}{2\pi} - \frac{1}{2\pi} \log\left(\frac{k}{2}\right)
\end{equation}

The existence of the additional logarithmic singularity does not add any difficulty to the construction of the quadrature weights;
now, the formula (\ref{eqn:quad:sum}) simply contains one more term.
\begin{equation}
	w_\ell = \begin{cases}
	\alpha^k_n(0)\,\w{\phi}^\alpha_0 + \beta^k_n(0)\,\w{\phi}^\beta_0 + \w{K}^k_n(0) & \ell = 0 \\
	\alpha^k_n(\rl) \left(\w{\phi}^\alpha_\ell - \phi^\alpha(\rl)\right) \vphi(\rl)
	+ \beta^k_n(\rl) \left(\w{\phi}^\beta_\ell - \phi^\beta(\rl)\right) \vphi(\rl) & \ell\ne0
	\end{cases}
\end{equation}
where $\phi^\alpha(r)=1/r^{n-2}$ and $\phi^\beta(r)=\log(r)$.
The regularized singularities $\w{\phi}^\alpha_\ell$ and $\w{\phi}^\beta_\ell$ are obtained by the same procedure, independently of each other.

\section{Convergence Analysis}
In this section, we present the rate of convergence of the presented quadrature rules depending on the regularity of the data.
The main result is that our corrected trapezoidal rules applied to smooth data converge faster than any algebraic order of accuracy.
We begin with defining useful regularity classes.
The Fourier coefficients of functions in each class shares a common form of upper bounds.
Then, we present the accuracy of the (uncorrected) trapezoidal rule for the regular integral (\ref{eqn:quad:G}).
Finally, the accuracy of the corrected trapezoidal rule for the singular integral (\ref{eqn:quad:F}) is presented.

\subsection{Decay characteristics of Fourier coefficients}
In spectral contexts, the classification of functions by the decay characteristics of their spectral coefficients will always be the best one
since the error of a numerical scheme is in principle determined by the rate of decay of the coefficients.
However, in common situations, an \emph{a priori} estimate is not likely to be available.
On the contrary, the smoothness of a function is more accessible,
and upper bounds for the Fourier coefficients $|\h{F}(k)|$ can be obtained by repeated applications of integration-by-parts.
One of the earliest application of this technique (in a somewhat diffrerent direction) can be found in \cite{Ly71}.
We present the extension of one-dimensional results (cf. \cite{Bo00,Bo09}) to higher dimensions.
Although this approach has been widely exercised, the employed regularity conditions vary depending on authors
resulting, sometimes, a less tight error bound for the trapezoidal rule.
We begin with the following definition.

\begin{definition}
\label{def:I}
	For any non-negative integer $P$, we denote by $C^P_{per}$
	the class of periodic (or periodized) $C^P$ functions with $U$ as a period, which satisfy the following conditions.
	Let $F\in C^P_{per}$.
	\begin{enumerate}
	\item For $P>0$ and $P$ is even, $\Delta^{(P/2)} F \in C^0_{per}$.
	\item For $P>0$ and $P$ is odd, $\partial_j\Delta^{\lfloor P/2\rfloor} F \in C^0_{per}$ for all $j=1,\ldots,m$.
	\item For $P=0$,
	(a) $U$ consists of a finite number of disjoint sub-domains on each of which $F$ is $C^1$
	up to the sub-domain boundary and $C^2$ with $\Delta F$ in $L^1$, and
	(b) the boundary of each sub-domain is of $C^1$ and each connected component of the boundary has two adjacent sub-domains.
	\end{enumerate} 
\end{definition}

Then, an $F\in C^P_{per}$ is of the H\"older class $C^{P,1}$.
With an arbitrary assignment of the orientation, unit normal vectors are well-defined on the sub-domain boundaries,
and so is the trace of $\partial(\Delta^{\lfloor P/2\rfloor}F)/\partial n$.
The conditions on $C^0_{per}$ regarding the piecewise smoothness are to enable the application of the Green's identities.
Those piecewise smoothness enable a more precise estimate by considering
two types of manageable singularities in the (P+2)nd derivatives of a function -- $L^1$ and $H^{-1}$.
Take the function $F\equiv\max(0,1-4t^2)$ on $[-1,1]$ for example.
$F$ is in $C^0_{per}$ and its Fourier coefficients are given by $\h{F}(k)=(\sinc(\pi k/2)-\cos(\pi k/2))/(\pi k/2)^2$ ($O(|k|^{-2})$).
Less rigorously, we can expect the result without the precise evaluation;
the second derivative $F^{(2)}$ consists of two delta functions at $\pm1/2$, hence, the Fourier coefficients of $F^{(2)}$ are $O(1)$,
which results in the $O(|k|^{-2})$-decay of $\h{F}(k)$.
On the contrary, the function $\sqrt{\max(0,1-4t^2)}$ is $C^0$ but does not satisfy the piecewise regularity conditions (hence, is not in $C^0_{per}$).
Its Fourier coefficients are $J_1(\pi k/2)/(2k)$, which is of $O(|k|^{-3/2})$.
We consider the first example can be observed more frequently than the second.
The second example illustrates that we can enrich the classification by introducing (weaker) classes with half-integer indices ($L^1$ first derivatives).
However, in this paper, we follow the virtue of simplicity.

\begin{lemma}
\label{lem:CP:decay}
Let $F\in C^P_{per}$. Then,
\begin{equation}
	\left| \h{F}(k) \right| = O\left( \norm{k}^{-(P+2)} \right)
\end{equation}
\end{lemma}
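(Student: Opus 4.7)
The plan is an induction on $P$ powered by multidimensional integration by parts, where the induction step is transparent and all the work is in the base case $P=0$.

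First I would handle the base case. For $G\in C^0_{per}$ set $v(y)=e^{-i\pi k\cdot y}$, so that $\Delta v=-\pi^2\|k\|^2 v$. Applying Green's second identity on each sub-domain $\Omega_i$ (whose existence and regularity are guaranteed by condition (3) of Definition~\ref{def:I}) and summing gives
\begin{equation*}
-\pi^2\|k\|^2\cdot 2^m\,\h{G}(k) \;-\; 2^m\,\widehat{\Delta G}(k) \;=\; \sum_i\int_{\partial\Omega_i}\bigl(G\,\partial_n v - v\,\partial_n G\bigr)\,dS.
\end{equation*}
Because $G$ is globally continuous and the sub-domain boundaries are traversed with opposite normals by the two adjacent sub-domains (condition (3b)), the $G\,\partial_n v$ contributions cancel across every interior interface, while the jumps $[\partial_n G]$ of the normal derivative remain. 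The surface integrals are therefore bounded by the total surface area of $\partial\Omega_i$ times $\sup|[\partial_n G]|$, hence $O(1)$; and $|\widehat{\Delta G}(k)|\le 2^{-m}\|\Delta G\|_{L^1(U)}=O(1)$ by the $L^1$ hypothesis on $\Delta G$. Solving for $\h{G}(k)$ yields $|\h{G}(k)|=O(\|k\|^{-2})$.

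Next I would handle even $P=2j$ with $j\ge 1$. Since $F$ and all its derivatives up through order $P$ are continuous and periodic, the usual periodic integration by parts (or, equivalently, Green's first identity with no boundary contributions) gives
\begin{equation*}
\widehat{\Delta^{j} F}(k) \;=\; \bigl(-\pi^2\|k\|^2\bigr)^{j}\,\h{F}(k).
\end{equation*}
By assumption $\Delta^{j}F\in C^0_{per}$, so the base case applied to $G=\Delta^{j}F$ gives $|\widehat{\Delta^{j}F}(k)|=O(\|k\|^{-2})$, and dividing yields $|\h{F}(k)|=O(\|k\|^{-(2j+2)})=O(\|k\|^{-(P+2)})$.

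For odd $P=2j+1$, the same periodic integration by parts gives, for any component $l$,
\begin{equation*}
\widehat{\partial_l\Delta^{j}F}(k) \;=\; i\pi k_l\bigl(-\pi^2\|k\|^2\bigr)^{j}\,\h{F}(k).
\end{equation*}
Since each $\partial_l\Delta^{j}F\in C^0_{per}$, the base case gives that the left side is $O(\|k\|^{-2})$. Choosing $l$ with $|k_l|\ge\|k\|/\sqrt{m}$ and solving gives $\|k\|^{2j+1}|\h{F}(k)|=O(\|k\|^{-2})$, i.e. $|\h{F}(k)|=O(\|k\|^{-(P+2)})$.

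The main obstacle is entirely in the base case: one has to verify that the piecewise-smoothness hypothesis (3) is exactly strong enough to make Green's identity on the sub-domains legitimate, to force the $G\,\partial_n v$ interface contributions to cancel (using $C^0$ continuity and paired normals), and to leave the jumps of $\partial_n G$ controlled by an $L^\infty$ bound on a finite-area boundary. Once this is done, the induction steps are immediate because periodicity kills every boundary term.
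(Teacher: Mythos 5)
Your proposal is correct and follows essentially the same route as the paper: Green's second identity on the sub-domains for the $P=0$ base case (with the $G\,\partial_n v$ interface terms cancelling and only the jump of the normal derivative plus the $L^1$ Laplacian surviving), followed by repeated periodic integration by parts to reduce general $P$ to that base case. The only cosmetic difference is in the odd case, where you solve for $\h{F}(k)$ from the single component $l$ with $|k_l|\ge\norm{k}/\sqrt{m}$ while the paper bounds the sum $\sum_j k_j\,\widehat{\partial_j F}(k)$ via $|\sum_j k_j|\le\sqrt{m}\,\norm{k}$; both yield the same $O(\norm{k}^{-(P+2)})$ bound.
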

\begin{proof}
(1) For $P=0$, we apply Green's second identity on each sub-domain.
Let $\Gamma$ be the union of the sub-domain boundaries with arbitrarily assigned orientation.
Utilizing $\Delta(e^{-i\pi k\cdot y})=-\pi^2\norm{k}^2\,e^{-i\pi k\cdot y}$ and
the continuity of $\partial(e^{-i\pi k\cdot y})/\partial n$, we obtain
\begin{equation}
\label{eqn:lem:CP:decay:I}
	2^m \h{F}(k) = \frac{1}{\pi^2 \norm{k}^2}
	\Bigg\{
		\int_{\Gamma} \bigg[\!\!\bigg[\frac{\partial F}{\partial n}\bigg]\!\!\bigg](y) \, e^{-i\pi k\cdot y} \, dS_y
		- \int_{U} \Delta F(y) \, e^{-i\pi k\cdot y} \, dy
	\Bigg\} ,
\end{equation}
where $[\![\cdot]\!]$ is the jump discontinuity across $\Gamma$.
Therefore, $|\h{F}(k)|=O\left(\norm{k}^{-2}\right)$.
(2) For $P=1$, we can apply Green's first identity.
By the continuity of the first derivatives, the boundary integral disappears.
\begin{equation}
	\h{F}(k) = -\frac{i}{\pi \norm{k}^2} \sum_{j=1}^m k_j \int_{U} \partial_j F(y) \, e^{-i\pi k\cdot y} \, dy .
\end{equation}
Since $\partial_j F\in C^0_{per}$ by definition, the integral for each $j$ is $O\left(\norm{k}^{-2}\right)$.
By the H\"older inequality, $|\sum k_j|\le\sqrt{m}\norm{k}$.
(3) For $P>1$, apply Green's second identity $\lfloor P/2\rfloor$ times with the boundary integrals vanishing. Then,
\begin{equation}
	\h{F}(k) = \left( \frac{-1}{\pi^2 \norm{k}^2} \right)^{\lfloor P/2\rfloor}
	\int_{U} \Delta^{\lfloor P/2\rfloor} F(y) \, e^{-i\pi k\cdot y} \, dy .
\end{equation}
Apply the result of (1) if $P$ is even, and apply (2) if odd.
\end{proof}

The upper bound given by Lemma $\ref{lem:CP:decay}$ is somewhat conservative;
for $m>1$, the integrals in (\ref{eqn:lem:CP:decay:I}) contribute to an additional decay (of possibly fractional order).
However, in one-dimension, the integral on $\Gamma$ becomes point-wise evaluations of the jump discontinuity,
hence, we cannot expect a faster decay.
Also, one should note that the smoothness is not the only factor which governs the decay characteristics.
Consider the function $\max(0,1-4r^2)^2$ which is in $C^1_{per}$.
Its Fourier coefficients are $((12-4(\pi k/2)^2)\sinc(\pi k/2)-12\cos(\pi k/2))/(\pi k/2)^4$,
hence, are of $O(|k|^{-4})$, which is faster than the estimated $O(|k|^{-3})$.
Although they are in different regularity classes, the function $\max(0,1-4r^2)^2\in C^1_{per}$
exhibits the same rate of convergence by the trapezoidal rule as a more smooth function $\max(0,1-4r^2)^3\in C^2_{per}$.

\begin{lemma}
\label{lem:CP:aconv}
Let $F\in C^P_{per}$. Then, $\sum_{k\in\Zm}|\h{F}(k)|<\infty$ if $P>m-2$.
\end{lemma}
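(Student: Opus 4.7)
The plan is to reduce the absolute summability to a standard convergence criterion for lattice sums of the form $\sum_{k\ne 0}\norm{k}^{-s}$, and then appeal to Lemma \ref{lem:CP:decay} to identify the relevant exponent $s$.

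First I would apply Lemma \ref{lem:CP:decay} directly: since $F\in C^P_{per}$, there is a constant $C>0$ and an $R_0>0$ such that $|\h{F}(k)|\le C\,\norm{k}^{-(P+2)}$ for all $k\in\Zm$ with $\norm{k}\ge R_0$. The finitely many remaining terms contribute a finite amount, so the question reduces to the convergence of
\begin{equation}
	\sum_{\norm{k}\ge R_0} \norm{k}^{-(P+2)} .
\end{equation}

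Next I would estimate this lattice sum by a shell-counting argument. For each integer $n\ge 1$, the number of $k\in\Zm$ with $\norm{k}\in[n,n+1)$ is bounded by the volume of the spherical shell of inner radius $n-\sqrt{m}/2$ and outer radius $n+1+\sqrt{m}/2$ (each lattice point contributes a unit cube contained in such an enlarged shell), which is $O(n^{m-1})$. Hence
\begin{equation}
	\sum_{\norm{k}\ge R_0} \norm{k}^{-(P+2)} \le C'\sum_{n\ge R_0} n^{m-1}\cdot n^{-(P+2)} = C'\sum_{n\ge R_0} n^{m-P-3} ,
\end{equation}
and this last series converges precisely when $m-P-3<-1$, i.e.\ when $P>m-2$. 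An alternative is to bound the sum by an integral in polar coordinates, $\int_{R_0}^\infty \rho^{m-1-(P+2)}\,d\rho$, giving the same threshold.

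The main obstacle, such as it is, is only a bookkeeping one: the bound from Lemma \ref{lem:CP:decay} holds asymptotically rather than uniformly on all of $\Zm\setminus\{0\}$, so one must separate the finite low-frequency part from the tail before invoking the comparison. There is no subtlety beyond that since the regularity hypothesis $P>m-2$ was chosen exactly to make the exponent in the lattice sum strictly smaller than $-m$.
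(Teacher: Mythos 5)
Your proof is correct and reaches the threshold $P>m-2$ by the same overall strategy as the paper — apply Lemma \ref{lem:CP:decay} and compare the resulting lattice sum with a convergent series — but your counting device is different, and the difference is worth noting. The paper groups lattice points by the exact value of $\norm{k}^2=n$, writes the count as the sum-of-squares function $r(n)$, and controls it via the Gauss circle estimate $R(n)=\sum_{j\le n}r(j)=O(n^{m/2})$, from which it infers $r(n)=O(n^{m/2-1})$. That last pointwise inference is delicate: $r(n)$ fluctuates (for $m=2$ the claimed bound would read $r(n)=O(1)$, yet $r(n)$ is unbounded along subsequences), so the paper's estimate is really only valid on average and would need a summation-by-parts step to be fully rigorous. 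Your version — counting the lattice points with $\norm{k}\in[n,n+1)$ by packing disjoint unit cubes into a fattened annulus of volume $O(n^{m-1})$ — yields a bound that genuinely holds for every single $n$, so the comparison with $\sum_n n^{m-1-(P+2)}$ goes through with no averaging; the convergence condition $m-P-3<-1$ is exactly $P>m-2$. Your preliminary step of splitting off the finitely many low frequencies is handled correctly, though it is not strictly needed: since $|\h{F}(k)|\le 2^{-m}\int_U|F|$ for all $k$, the decay bound from Lemma \ref{lem:CP:decay} can be taken uniform over $k\ne0$ after enlarging the constant. In short, your route is a minor variant of the paper's that is, if anything, the more robust of the two.
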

\begin{proof}
Rewrite the infinite series by $|\h{F}(0)|+\sum_{n=1}^\infty\sum_{\norm{k}^2=n} |\h{F}(k)|$.
Let $r(n)$ be the number of lattice points on the sphere $\big\{\norm{k}^2=n\big\}$, which is also known as the sum-of-squares function.
Then, $R(n)\equiv\sum_{n=0}^N r(n)$ is the number of lattice points in the ball $\big\{\norm{k}^2\le n\big\}$,
and the Gauss' circle problem in $\Zm$ states that $R(n)=O(n^{m/2})$ (cf. \cite{MiSz47}).
Since $|\h{F}(k)|=O(n^{-(P+2)/2})$ for $\norm{k}^2=n$ by Lemma \ref{lem:CP:decay}, the series is bounded above by
$\h{F}(0) + C \sum_{n=1}^\infty r(n)/n^{(P+2)/2}$ for some $C$.
Since $r(n)/n^{(P+2)/2}=(R(n)-R(n-1))/n^{(P+2)/2}=O((n^{m/2}-(n-1)^{m/2})/n^{(P+2)/2})$
and $n^{m/2}-(n-1)^{m/2}=O(n^{m/2-1})$, $r(n)/n^{(P+2)/2}=O(n^{-(P-m+4)/2})$,
which implies that the series converges if $P>m-2$.
\end{proof}

\begin{corollary}
\label{cor:CP:uconv}
Let $F\in C^P_{per}$. The multiple Fourier series
\begin{equation}
\label{eqn:fseries}
	\sum_{k\in\Zm} \h{F}(k) e^{i\pi k\cdot y}
\end{equation}
converges uniformly to $F(y)$ if $P>m-2$.
\end{corollary}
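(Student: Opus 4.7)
The plan is to combine the absolute summability granted by Lemma~\ref{lem:CP:aconv} with a standard uniqueness argument for Fourier coefficients of continuous functions. First, I would observe that since $|\h{F}(k)\,e^{i\pi k\cdot y}|=|\h{F}(k)|$ and $\sum_{k\in\Zm}|\h{F}(k)|<\infty$ under the hypothesis $P>m-2$, the Weierstrass $M$-test applies directly to the series in (\ref{eqn:fseries}). This immediately yields uniform convergence on $U$ (indeed, on all of $\RR^m$ by periodicity) to some limit function, call it $G$. As a uniform limit of continuous trigonometric polynomials, $G$ is itself continuous and periodic.

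Next, I would identify $G$ with $F$. Since $F\in C^P_{per}$ with $P\ge 0$, $F$ is at least continuous on $U$ (in fact H\"older continuous). To conclude $G=F$, I would compute the Fourier coefficients of $G$: uniform convergence on the bounded domain $U$ permits interchanging the sum with the integral in the defining formula (\ref{eqn:ft}), giving
\begin{equation*}
	\h{G}(k') \;=\; \frac{1}{2^m}\int_{\RR^m} \sum_{k\in\Zm}\h{F}(k)\,e^{i\pi k\cdot y}\,e^{-i\pi k'\cdot y}\,dy \;=\; \sum_{k\in\Zm}\h{F}(k)\,\delta_{k,k'} \;=\; \h{F}(k'),
\end{equation*}
using the orthogonality of the complex exponentials on $U$. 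Hence $F$ and $G$ are two continuous periodic functions with identical Fourier coefficients, and by the standard uniqueness theorem for Fourier series on the torus they must coincide pointwise.

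The main obstacle, if any, is the identification step rather than the uniform convergence itself, which is automatic from Lemma~\ref{lem:CP:aconv} via Weierstrass. One must ensure that the coefficients $\h{F}(k)$ defined by (\ref{eqn:ft}) genuinely represent $F$ in a sufficiently strong sense to apply uniqueness. This is essentially immediate since $F\in C^P_{per}\subset L^2(U)$ and the trigonometric system is complete in $L^2(U)$, so $F$ is the $L^2$-limit of its partial Fourier sums; the uniform limit $G$ must agree with this $L^2$-limit almost everywhere, and since both $F$ and $G$ are continuous, the equality holds everywhere. Alternatively, one could invoke Fej\'er's theorem in $m$ dimensions: the Ces\`aro means of the Fourier series converge uniformly to $F$ because $F$ is continuous, and whenever an infinite series converges uniformly its Ces\`aro means share the same limit, yielding $G=F$ without detouring through $L^2$.
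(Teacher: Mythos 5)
Your argument is correct and matches the route the paper intends: the corollary is stated as an immediate consequence of Lemma~\ref{lem:CP:aconv}, i.e., absolute summability of $\h{F}(k)$ gives uniform convergence by the Weierstrass $M$-test, and identification of the limit with $F$ is the standard uniqueness argument for continuous periodic functions. Your extra care in justifying the identification step (via term-by-term integration and orthogonality, or Fej\'er) fills in exactly what the paper leaves implicit.
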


The criterion $P>m-2$ is somewhat excessive in higher dimensions.
As mentioned earlier, if we consider the additional decay, the convergence criterion can be relaxed.

\subsection{Accuracy of the trapezoidal rule}
By substituting the Fourier series (\ref{eqn:fseries}) into the definition of DFT (\ref{eqn:dft}), we obtain
\begin{equation}
	\h{F}_k = \frac{1}{2^m \Nb} \sum_{k'\in\Zm} \h{F}(k') \sum_{\ell\in\IN} e^{i\pi(k'-k)\cdot\yl} .
\end{equation}
The inner sum is $2^m \Nb$ if $k'_j=k_j+2q_jN_j$ for all $j$ and for some $q\in\Zm$,
and vanishes otherwise.
Therefore,
\begin{align}
\label{eqn:aliasing}
	\h{F}_k &= \sum_{q\in\Zm} \h{F}(k_1+2q_1N_1,\ldots,k_m+2q_mN_m) \nonumber \\
	&= \h{F}(k) + \sum_{q\in\Zm\setminus\{0\}} \h{F}(k_1+2q_1N_1,\ldots,k_m+2q_mN_m) .
\end{align}
The above equation describes that the coefficients $\h{F}_k$ obtained from the DFT
contain the aliasing error from the harmonics ($\sum_{q\ne0}$).

Notice that $\h{F}_0$ is the integral of $F$ on $U$ obtained by the trapezoidal rule
(for periodic functions, hence, considering only one side of the boundary),
and $\h{F}(0)$ is the exact integral.
Hence, an error bound of the trapezoidal rule is given by
\begin{equation}
	| \h{F}(0) - \h{F}_0 | \le
	\sum_{q\in\Zm\setminus\{0\}} | \h{F}(2q_1N_1,\ldots,2q_mN_m) | .
\end{equation}
Let $N_{min}=\min(N_1,\ldots,N_m)$. The following theorem states the order of accuracy of the trapezoidal rule applied to a $C^P_{per}$ function.

\begin{theorem}[The trapezoidal rule]
\label{thm:regular}
For any $F\in C^P_{per}$ with $P>m-2$. The error of the trapezoidal rule applied to $F$ is $O(N_{min}^{-(P+2)})$.
\end{theorem}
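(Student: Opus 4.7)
The plan is to combine the aliasing identity displayed immediately before the theorem with the decay estimate of Lemma \ref{lem:CP:decay}. The error bound
\[
	|\h{F}(0)-\h{F}_0| \le \sum_{q\in\Zm\setminus\{0\}} |\h{F}(2q_1N_1,\ldots,2q_mN_m)|
\]
reduces the problem to controlling the magnitudes of $\h{F}$ at the aliased lattice $\{(2q_1N_1,\ldots,2q_mN_m):q\ne0\}$. The central observation is that every such point has Euclidean norm at least $2N_{min}\norm{q}$, since
\[
	\norm{(2q_1N_1,\ldots,2q_mN_m)}^2 = \sum_{j=1}^m (2q_jN_j)^2 \ge 4N_{min}^2\norm{q}^2 .
\]

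Applying Lemma \ref{lem:CP:decay}, I would obtain a constant $C>0$ (depending only on $F$ and $m$) with
\[
	|\h{F}(2q_1N_1,\ldots,2q_mN_m)| \le C\,(2N_{min}\norm{q})^{-(P+2)} ,
\]
which after summing yields
\[
	|\h{F}(0)-\h{F}_0| \le C\,(2N_{min})^{-(P+2)} \sum_{q\in\Zm\setminus\{0\}} \norm{q}^{-(P+2)} .
\]
It then remains to show that the series $S\equiv\sum_{q\ne0}\norm{q}^{-(P+2)}$ is finite under the hypothesis $P>m-2$. This is precisely the Gauss-circle argument used in the proof of Lemma \ref{lem:CP:aconv}: writing $S$ as $\sum_{n\ge1} r(n)\,n^{-(P+2)/2}$ and invoking $r(n)=O(n^{m/2-1})$ gives a series with general term $O(n^{-(P-m+4)/2})$, which converges exactly when $P>m-2$.

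The main (and only) obstacle is this last convergence step, and it has already been established in the proof of Lemma \ref{lem:CP:aconv}; so this theorem follows without any additional tool. The resulting constant is independent of the individual $N_j$, which is essential for the statement to read as a bound in $N_{min}$ alone rather than a bound depending on the aspect ratios of the grid.
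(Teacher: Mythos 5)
Your argument is correct and follows essentially the same path as the paper's proof: the aliasing bound, the lower bound $\norm{(2q_1N_1,\ldots,2q_mN_m)}\ge 2N_{min}\norm{q}$, Lemma~\ref{lem:CP:decay} for the decay, and the Gauss-circle/sum-of-squares argument from Lemma~\ref{lem:CP:aconv} to establish convergence of $\sum_{q\ne0}\norm{q}^{-(P+2)}$ under $P>m-2$. No substantive differences.
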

\begin{proof}
Let $p=(2q_1N_1,\ldots,2q_mN_m)$. Then, $\norm{p}\ge 2N_{min}\norm{q}$ and
$$|\h{F}(p)| \le C/(N_{min}\norm{q})^{(P+2)}$$ for some $C$ by Lemma \ref{lem:CP:decay}.
Rewriting $\sum_{q\in\Zm\setminus\{0\}}$ by $\sum_{n=1}^\infty\sum_{\norm{q}^2=n}$,
\begin{equation*}
	| \h{F}(0) - \h{F}_0 | \le \frac{C}{N_{min}^{P+2}} \sum_{n=1}^\infty \frac{r(n)}{n^{(P+2)/2}}
\end{equation*}
where $r(n)$ is defined in the proof of Lemma \ref{lem:CP:aconv} and is $O(n^{m/2-1})$.
The series on the right side converges if $P>m-2$.
\end{proof}

\subsection{Accuracy of the singular trapezoidal rule}
From (\ref{eqn:fseries}) and (\ref{eqn:aliasing}), the error of the interpolation $\w{F}$ defined by (\ref{eqn:intp}) can be written as
\begin{equation}
\begin{split}
	F(y)-\w{F}(y) &= \sum_{k\not\in\IN} \h{F}(k)\,e^{i\pi k\cdot y} \\
	&- \sum_{k\in\IN}\sum_{q\in\Zm\setminus\{0\}} \h{F}(k_1+2q_1N_1,\ldots,k_m+2q_mN_m)\,e^{i\pi k\cdot y} .
\end{split}
\end{equation}
Hence, the error of the singular quadrature rule (\ref{eqn:quad:F}) is given by
\begin{equation}
\label{eqn:error:singular}
\begin{split}
	&\left| \int_U (F(y)-\w{F}(y))\,\phi(r)\,dy \right| \le \sum_{k\not\in\IN} |\h{F}(k)| \, |\h{\phi}(k)| \\
	&+ \sum_{k\in\IN} |\h{\phi}(k)| \sum_{q\in\Zm\setminus\{0\}} |\h{F}(k_1+2q_1N_1,\ldots,k_m+2q_mN_m)|
\end{split}
\end{equation}

\begin{theorem}[The singular quadrature rule]
\label{thm:singular}
Let $F\in C^P_{per}$ with $P>m-2$.
The error of the quadrature rule (\ref{eqn:quad:F}) is $O(N_{min}^{-(P+2-(m-1)/2)})$.
\end{theorem}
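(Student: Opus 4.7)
The plan is to bound separately the truncation and aliasing contributions in the error expression (\ref{eqn:error:singular}). The two essential decay estimates are already in hand: Lemma \ref{lem:phi} gives $|\h{\phi}(k)| = O(\norm{k}^{-(m+1)/2})$, and Lemma \ref{lem:CP:decay} gives $|\h{F}(k)| = O(\norm{k}^{-(P+2)})$. The rest is careful bookkeeping of lattice sums, of the same flavor as in the proofs of Lemma \ref{lem:CP:aconv} and Theorem \ref{thm:regular}.

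For the truncation term $\sum_{k \not\in \IN} |\h{F}(k)|\,|\h{\phi}(k)|$, I would combine the two decay rates so that each summand is of order $\norm{k}^{-(P+2+(m+1)/2)}$, then reorganize the sum over lattice shells $\norm{k}^2 = n$ using the sum-of-squares counting $r(n) = O(n^{m/2-1})$ from Lemma \ref{lem:CP:aconv}. Since every $k \not\in \IN$ has at least one coordinate of absolute value $\ge N_{min}$, one has $\norm{k} \ge N_{min}$, so the tail reduces to a scalar sum behaving like $\int_{N_{min}}^\infty r^{m-1}\,r^{-(P+2+(m+1)/2)}\,dr$. The integral converges (the hypothesis $P>m-2$ is more than enough to ensure exponent $> m$) and evaluates to $O(N_{min}^{-(P+2-(m-1)/2)})$, matching the claimed rate.

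For the aliasing term, the key elementary inequality is a lower bound on the aliased arguments: for $k \in \IN$ and any $q \ne 0$, a componentwise analysis using $|k_j| \le N_j$ and the triangle inequality gives $|k_j + 2q_j N_j| \ge |q_j| N_j \ge |q_j| N_{min}$ whenever $q_j \ne 0$, and hence $\norm{(k_1+2q_1N_1,\ldots,k_m+2q_mN_m)} \ge N_{min}\,\norm{q}$. Inserting this into the $\h{F}$-decay estimate converts the inner sum into $C\,N_{min}^{-(P+2)} \sum_{q\ne 0} \norm{q}^{-(P+2)}$, where the series over $q$ converges precisely under $P > m-2$ by the same argument as in Lemma \ref{lem:CP:aconv}. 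What remains is the factor $\sum_{k\in\IN} |\h{\phi}(k)|$: this is not summable over all of $\Zm$, but on the finite set $\IN$ it grows only polynomially, and the same shell-counting applied to $\norm{k}^{-(m+1)/2}$ up to $\norm{k} \lesssim N_{max}$ gives $O(N_{min}^{(m-1)/2})$ under the standing assumption that the grid sizes $N_j$ are mutually comparable. Multiplying these two factors yields the same overall bound $O(N_{min}^{-(P+2-(m-1)/2)})$.

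The main obstacle, and the explanation for why the rate is $(m-1)/2$ orders slower than in Theorem \ref{thm:regular}, is exactly the loss of $N_{min}^{(m-1)/2}$ incurred by the non-summability of $\h{\phi}$ on $\Zm$. It is tempting to try to bound the error by the usual trapezoidal error of $F$ times a uniform bound on $\w{\phi}_\ell$, but that route fails because $|\w{\phi}_\ell|$ itself can be as large as $O(N_{min}^{(m-1)/2})$. The argument above succeeds by keeping $\h{\phi}$ inside the sum and exploiting the matching per-shell decay of the two factors, so that the algebraic tail of $\h{\phi}$ is absorbed against the faster smoothness-driven decay of $\h{F}$.
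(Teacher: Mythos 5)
Your proposal is correct and follows essentially the same route as the paper's proof: the same split of (\ref{eqn:error:singular}) into truncation and aliasing parts, the same shell-counting via $r(n)=O(n^{m/2-1})$, the same lower bound $\norm{(k_1+2q_1N_1,\ldots,k_m+2q_mN_m)}\ge N_{min}\norm{q}$, and the same identification of the $O(N_{min}^{(m-1)/2})$ growth of $\sum_{k\in\IN}|\h{\phi}(k)|$ as the source of the $(m-1)/2$ loss. Your explicit remark that the grid sizes must be mutually comparable for the $\h{\phi}$-sum bound is in fact slightly more careful than the paper, which silently truncates that shell sum at $n=N_{min}^2$.
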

\begin{proof}
(1) First, consider the first summation in (\ref{eqn:error:singular}).
Since $\h{F}(k)=O(\norm{k}^{-(P+2)})$ by Lemma \ref{lem:CP:decay}
and $\h{\phi}(k)=O(\norm{k}^{-(m+1)/2})$ by Lemma \ref{lem:phi},
\begin{equation*}
	\sum_{k\not\in\IN} |\h{F}(k)| |\h{\phi}(k)| \le \sum_{n=N_{min}^2}^\infty\sum_{\norm{k}^2=n} \frac{C}{n^{(P+m/2+5/2)/2}} \\
	= \sum_{n=N_{min}^2}^\infty \frac{C\,r(n)}{n^{(P+m/2+5/2)/2}}
\end{equation*}
where $r(n)=O(n^{m/2-1})$ as in the proof of Lemma \ref{lem:CP:aconv}.
Thus, the right side is bounded by $C\sum_{n=N_{min}^2}^\infty n^{-(P-m/2+9/2)/2}$,
which converges if $P>(m-5)/2$ and is $O(N_{min}^{-(P+2-(m-1)/2)})$.
(2) Consider the second term in (\ref{eqn:error:singular}).
Since $|k_j|\le N_j$ for $j=1,\ldots,m$, the minimum of the parabola $(k_j+2q_jN_j)^2$ is $\min((2q_j\pm1)^2N_j^2)$ if $q_j\ne0$.
Hence, for $q_j\ne0$, $(k_j+2q_jN_j)^2\ge(2|q_j|-1)^2N_j^2\ge q_j^2N_j^2$, and the last inequality holds for $q_j=0$ also.
Therefore, $\norm{(k_1+2q_1N_1,\ldots,k_m+2q_mN_m)}\ge\norm{q}N_{min}$
and $|\h{F}(k_1+2q_1N_1,\ldots,k_m+2q_mN_m)|=O((N_{min}\norm{q})^{-(P+2)})$ by Lemma \ref{lem:CP:decay}.
Then, the second term is bounded by
\begin{equation}
\label{eqn:thm:singular:b}
	C \bigg\{ |\h{\phi}(0)| + \sum_{n=1}^{N_{min}^2}\sum_{\norm{k}^2=n} n^{-(m+1)/4} \bigg\}
	\bigg\{ N_{min}^{-(P+2)} \sum_{p=1}^\infty \sum_{\norm{q}^2=p} p^{-(P+2)/2} \bigg\} ,
\end{equation}
where we utilized Lemma \ref{lem:phi} to get the upper bound for $|\h{\phi}(k)|$.
The infinite series in the second pair of braces converges if $P>m-2$.
Since
\begin{equation}
	\sum_{n=1}^{N_{min}^2}\sum_{\norm{k}^2=n} n^{-(m+1)/4} \le C' \sum_{n=1}^{N_{min}^2} n^{(m-5)/4} = O(N_{min}^{(m-1)/2}) ,
\end{equation}
the second term in (\ref{eqn:error:singular}) is $O(N_{min}^{-(P+2-(m-1)/2)})$.
Thus, (1) and (2) complete the proof.
When the kernel is less singular and $|\h{\phi}(k)|$ decays faster than $O(\norm{k}^{-(m+1)/2})$,
(\ref{eqn:thm:singular:b}) becomes dominant and determines the rate of convergence.
If $|\h{\phi}(k)|$ decays faster than $O(\norm{k}^{-m})$, the first term in (\ref{eqn:thm:singular:b}) is $O(1)$ and
the rate of convergence is the same as the regular trapezoidal rule, i.e. $(P+2)$.
\end{proof}

Combined with Theorem \ref{thm:regular}, Theorem \ref{thm:singular} indicates that
the order of accuracy of the corrected trapezoidal rule presented in this paper is at least $(P+2-(m-1)/2)$ for the integral (\ref{eqn:Ix}) with $f\in C^P_{per}$.
If $f$ is smooth, the corrected trapezoidal rules converges faster than any algebraic order.
The dimension-dependent degradation from the intended $(P+2)$ (as the regular trapezoidal rule) by the amount of $-(m-1)/2$
is more like a technical outcome; in actual experiments with $m\le3$, we have not experienced any obvious degradation in the rate of convergence
and results strongly imply that the corrected trapezoidal rule exhibits the same rate of convergence as the usual trapezoidal rule without the singular kernel.
The same is true for the condition $P>m-2$.
Our cautious conjecture is that, as briefly mentioned previously, there is an additional decay for $m>1$ originating from the integrals in (\ref{eqn:lem:CP:decay:I}),
which cancels (at least, a part of) the degradation.

The intention of the somewhat complicated proof of Theorem \ref{thm:singular} can be well illustrated by considering simple one-dimensional cases.
For $C^P_{per}$ functions, both the trapezoidal rule and the corrected trapezoidal rule exhibit $(P+2)$nd order convergence.
The interpolation error, however, is only of $(P+1)$st order.
The mutual cancellation of overshooting and undershooting, which has been well explained for the regular trapezoidal rule,
holds identically for our singular quadrature rules.
In the singular cases, the additional one in the rate of convergence originates from the decay characteristics of $|\h{\phi}(k)|$.

\section{Numerical Examples}

\subsection{Test of convergence: Helmholtz kernels with $\bm{k=0}$}
\label{sec:conv:I}
We evaluate the convolution with the non-oscillatory kernels on $\RR^m$ ($m=1,2,3$):
\begin{equation}
	u = K^0_n * f
\end{equation}
for $n=m$ and $m+1$.
For $n=m$, $u$ corresponds to the volume potential induced by the surce $f$.
For $n=m+1$, the convolution corresponds to the application of the single layer operator
as a boundary to boundary integral operator on $m$-dimensional flat boundary in $\RR^{m+1}$.
Convolutions were performed on uniform grids with the spacing $6/N$ on the physical domain $[-3,3]^m$.
The errors are reported in $L_\infty$ norm from the values of the numerical and the exact solutions obtained at grid points.

Table \ref{tab:p:a} and \ref{tab:p:b} summarize the results of one-dimensional convolution with the logarithmic kernel: $K^0_2=-\log(r)/(2\pi)$.
Reference solutions are computed up to the machine precision using adaptive quadratures.
For the demonstration, we selected three different sources,
\begin{equation}
\begin{aligned}
	f_G(r) &= \exp\left(-(r/a)^2\right) &&(a=1/2) \\
	f_B(r) &= \exp\left(12-12/(1-(r/a)^2)\right) &&(a=2) \\
	f_P(r) &= \max\left(0,1-(r/a)^2\right)^7 &&(a=2) ,
\end{aligned}
\end{equation}
based on their regularity characteristics.
The Gaussian ($f_B$) is analytic but is not compactly supported. 
However, on the boundary of the domain employed, $|f_G|\sim10^{-16}$ and any significant error of the domain truncation has not been observed.
Actually, the spectrum of the Gaussian (in any dimension) exhibits the most rapid decay, which makes it a perfect specimen for the test of the spectral accuracy.
Table \ref{tab:p:a} shows a clear super-algebraic convergence of our quadrature rule; each refinement doubles the number of correct digits.
The bump function $f_B$ is smooth and compactly supported, hence, satisfies our assumptions faithfully.
However, its Fourier coefficients decays quite slowly compared with the Gaussian.
The left column of Table \ref{tab:p:b} shows that the estimated order keeps increasing but the increase is slower than that of the Gaussian cases in Table \ref{tab:p:a}.
The function $f_P$ is not smooth but in $C^6_{per}$.
Hence, the estimated order is fixed to the algebraic order of 8 (right column in Table \ref{tab:p:b}),
which is expected in the convergence analysis.

Recall \S\ref{sec:therule} that we can use arbitrarily refined grid during the construction of the quadrature weights.
Such a refined construction increases only the construction time but does not affect the convolution time
since the obtained weights will be truncated in the frequency domain.
The objective of the use of a refined construction grid is to suppress errors originating from insufficient samplings of $\alpha$, $\vphi$, and $K$.
Since $K^0_n$ are non-oscillatory ($\alpha$ is a constant and $\w{K}=0$), the error during the construction originates mostly from $\vphi$.
In Table \ref{tab:p:a}, the results on the left column are obtained using the weights constructed on the same grid as the data.
For the results on the right column, we doubled the sampling frequency of the construction grid.
For small $N$, the difference is negligible since the error from the data is dominant.
However, the interpolation error of the Gaussian decreases more rapidly than that of $\vphi$.
Hence, if we use the same grid, the construction error stand out eventually.
The remedy is simple; we can use a slightly higher sampling frequency for the construction.
As for $f_B$ and $f_P$, their interpolation errors decrease more slowly than $\vphi$,
and the results are almost indistinguishable with or without using a refined construction grid (see Table~\ref{tab:p:b}).

Table~\ref{tab:p:c} shows the results for $(K^0_n*f_G)$ in higher dimensions ($m=2,3$ and $n=m,m+1$).
For the Gaussian, we know the exact solutions, which are given by
\begin{equation}
\begin{aligned}
	(m=2,n=2)\quad
	u(r) &= \frac{a^2}{4} \log\left( \frac{\exp(-E_1(\rho^2))}{\rho^2} \right) - \frac{a^2}{2} \log(a)
	\\
	(m=2,n=3)\quad
	u(r) &= \frac{a\sqrt{\pi}}{4} \, \exp\left(-\frac{\rho^2}{2}\right) I_0\left(\frac{\rho^2}{2}\right)
	\\
	(m=3,n=3)\quad
	u(r) &= \frac{a^2\sqrt{\pi}}{4} \frac{\erf(\rho)}{\rho}
	\\
	(m=3,n=4)\quad
	u(r) &= \frac{a}{2\sqrt{\pi}} \exp\left(-\frac{\rho^2}{2}\right)
		\int_0^1 \exp\left(-\frac{\rho^2}{2} t^2 \right) I_0\left( \frac{\rho^2}{2} (1-t^2) \right) \,dt
\end{aligned}
\end{equation}
where $\rho=r/a$ and $a=1/2$.
As in the one-dimensional case, we can observe clear spectral rates of convergences.

\begin{table}[t]
\begin{center}
\begin{tabular}{r r@{.}l c r@{.}l c}
\toprule
	& \multicolumn{3}{c}{$f_G$ (without ref.)}
	& \multicolumn{3}{c}{$f_G$ (with ref.)} \\
\cmidrule(r){2-4}
\cmidrule(r){5-7}
	$N$
	& \multicolumn{2}{c}{$E_N$} & $\log_2\frac{E_{N/2}}{E_N}$
	& \multicolumn{2}{c}{$E_N$} & $\log_2\frac{E_{N/2}}{E_N}$ \\
\midrule
	  5 & $5$ & $58\times10^{-2}$   &            --- & $5$ & $59\times10^{-2}$   &     --- \\
	10 & $3$ & $26\times10^{-3}$   & $\;\:4.1$ & $3$ & $26\times10^{-3}$   & $\;\:4.1$ \\
	20 & $1$ & $30\times10^{-6}$   &   $11.3$ & $1$ & $30\times10^{-6}$   & $11.3$ \\
	40 & $3$ & $32\times10^{-13}$ &   $21.9$ & $3$ & $89\times10^{-16}$ & $31.6$ \\
\bottomrule\\
\end{tabular}
\end{center}
\caption{$L_\infty$ errors of $(K^0_2*f_G)$ on $\RR$:
(left) without and (right) with using the doubly-refined grid for the construction of the weights.}
\label{tab:p:a}
\end{table}

\begin{table}[t]
\begin{center}
\begin{tabular}{r r@{.}l c r@{.}l c}
\toprule
	& \multicolumn{3}{c}{$f_B$}
	& \multicolumn{3}{c}{$f_P$} \\
\cmidrule(r){2-4}
\cmidrule(r){5-7}
	$N$
	& \multicolumn{2}{c}{$E_N$} & $\log_2\frac{E_{N/2}}{E_N}$
	& \multicolumn{2}{c}{$E_N$} & $\log_2\frac{E_{N/2}}{E_N}$ \\
\midrule
	  5 & $4$ & $17\times10^{-2}$   &           --- & $1$ & $46\times10^{-2}$   &       --- \\
	10 & $7$ & $21\times10^{-4}$   & $\;\:5.8$ & $5$ & $65\times10^{-5}$   & $8.0$ \\
	20 & $1$ & $45\times10^{-6}$   & $\;\:9.0$ & $2$ & $36\times10^{-7}$   & $7.9$ \\
	40 & $9$ & $25\times10^{-10}$ &  $10.6$ & $7$ & $31\times10^{-10}$ & $8.3$ \\
	80 & $2$ & $36\times10^{-14}$ &  $15.3$ & $4$ & $33\times10^{-12}$ & $7.4$ \\
\bottomrule\\
\end{tabular}
\end{center}
\caption{$L_\infty$ errors of $(K^0_2*f_B)$ and $(K^0_2*f_P)$ on $\RR$.}
\label{tab:p:b}
\end{table}

\begin{table}[t]
\begin{center}
\begin{tabular}{r r@{.}l c r@{.}l c}
\toprule
	& \multicolumn{3}{c}{$(m=2,n=2)$}
	& \multicolumn{3}{c}{$(m=2,n=3)$} \\
\cmidrule(r){2-4}
\cmidrule(r){5-7}
	$N$
	& \multicolumn{2}{c}{$E_N$} & $\log_2\frac{E_{N/2}}{E_N}$
	& \multicolumn{2}{c}{$E_N$} & $\log_2\frac{E_{N/2}}{E_N}$ \\
\midrule
	  5 & $1$ & $06\times10^{-1}$   &           ---  & $4$ & $88\times10^{-2}$    &            --- \\
	10 & $3$ & $96\times10^{-3}$   & $\;\:4.7$ & $4$ & $70\times10^{-3}$    & $\;\:3.4$ \\
	20 & $8$ & $99\times10^{-7}$   &   $12.1$ & $2$ & $35\times10^{-6}$    &  $11.0$ \\
	40 & $5$ & $55\times10^{-16}$ &   $30.6$ & $3$ & $33\times10^{-16}$ &   $32.7$ \\
\bottomrule\\
\end{tabular}
\begin{tabular}{r r@{.}l c r@{.}l c}
\toprule
	& \multicolumn{3}{c}{$(m=3,n=3)$}
	& \multicolumn{3}{c}{$(m=3,n=4)$} \\
\cmidrule(r){2-4}
\cmidrule(r){5-7}
	$N$
	& \multicolumn{2}{c}{$E_N$} & $\log_2\frac{E_{N/2}}{E_N}$
	& \multicolumn{2}{c}{$E_N$} & $\log_2\frac{E_{N/2}}{E_N}$ \\
\midrule
	  5 & $4$ & $04\times10^{-2}$   &            --- & $1$ & $41\times10^{-2}$    &          --- \\
	10 & $4$ & $10\times10^{-3}$   & $\;\:3.3$ & $5$ & $03\times10^{-3}$    & $\;\:1.5$ \\
	20 & $1$ & $19\times10^{-6}$   &   $11.8$ & $3$ & $22\times10^{-6}$    &   $10.6$ \\
	40 & $1$ & $05\times10^{-15}$ &   $30.1$ & $3$ & $05\times10^{-16}$ &   $33.3$ \\
\bottomrule\\
\end{tabular}
\end{center}
\caption{$L_\infty$ errors of $(K^0_n*f_G)$ on $\RR^m$.}
\label{tab:p:c}
\end{table}

\subsection{Test of convergence: Helmholtz kernels with $\bm{k\ne0}$}
\label{sec:conv:II}
Convergence tests were performed for the Helmholtz kernels $K^k_n$ with nonzero wavenumber.
First, we present results with real wavenumber $k=2\pi$ on $[-3,3]^m$.
The Gaussian $f_G$ was selected as the source.
Since the Fourier coefficients of $f_G$ decay rapidly, the quadrature rule, if is is properly constructed, should exhibit a similar fast convergence.
Unlike non-oscillatory kernels of the previous section, we do not know the exact solution even for the Gaussian.
Moreover, it is very difficult and expensive with adaptive quadratures to obtain the reference solutions accurate up to the machine-precision.
Hence, the reference solutions were evaluated only at the origin, where the error is likely to be largest.
The results are summarized in Table \ref{tab:h:a} and \ref{tab:h:b},
from both of which we can observe the expected spectral rates of convergences.

\begin{table}[htb]
\begin{center}
\begin{tabular}{r r@{.}l c r@{.}l c}
\toprule
	& \multicolumn{3}{c}{$(m=1,n=2,\text{without ref.})$}
	& \multicolumn{3}{c}{$(m=1,n=2,\text{with ref.})$} \\
\cmidrule(r){2-4}
\cmidrule(r){5-7}
	$N$
	& \multicolumn{2}{c}{$E_N$} & $\log_2\frac{E_{N/2}}{E_N}$
	& \multicolumn{2}{c}{$E_N$} & $\log_2\frac{E_{N/2}}{E_N}$ \\
\midrule
	  5 & $4$ & $11\times10^{-2}$   &            --- & $7$ & $95\times10^{-2}$   &     --- \\
	10 & $4$ & $66\times10^{-2}$   & $\!\!-0.2$ & $6$ & $47\times10^{-3}$   & $\;\:3.6$ \\
	20 & $2$ & $89\times10^{-4}$   &  $\;\:7.3$ & $2$ & $82\times10^{-6}$  & $11.2$ \\
	40 & $2$ & $61\times10^{-11}$ &   $23.4$ & $3$ & $93\times10^{-17}$ & $36.1$ \\
\bottomrule\\
\end{tabular}
\end{center}
\caption{$L_\infty$ errors of $(K^k_2*f_G)$ on $\RR$ with $k=2\pi$:
(left) without and (right) with using the doubly-refined grid for the construction of the weights.}
\label{tab:h:a}
\end{table}

\begin{table}[htb]
\begin{center}
\begin{tabular}{r r@{.}l c r@{.}l c}
\toprule
	& \multicolumn{3}{c}{$(m=2,n=2)$}
	& \multicolumn{3}{c}{$(m=2,n=3)$} \\
\cmidrule(r){2-4}
\cmidrule(r){5-7}
	$N$
	& \multicolumn{2}{c}{$E_N$} & $\log_2\frac{E_{N/2}}{E_N}$
	& \multicolumn{2}{c}{$E_N$} & $\log_2\frac{E_{N/2}}{E_N}$ \\
\midrule
	  5 & $4$ & $01\times10^{-2}$   &           ---  & $1$ & $02\times10^{-1}$    &            --- \\
	10 & $1$ & $14\times10^{-2}$   & $\;\:1.8$ & $1$ & $26\times10^{-2}$    & $\;\:3.0$ \\
	20 & $2$ & $46\times10^{-6}$   &   $12.2$ & $4$ & $77\times10^{-6}$    &  $11.4$ \\
	40 & $2$ & $08\times10^{-17}$ &   $36.8$ & $2$ & $55\times10^{-16}$ &   $34.1$ \\
\bottomrule\\
\end{tabular}
\begin{tabular}{r r@{.}l c r@{.}l c}
\toprule
	& \multicolumn{3}{c}{$(m=3,n=3)$}
	& \multicolumn{3}{c}{$(m=3,n=4)$} \\
\cmidrule(r){2-4}
\cmidrule(r){5-7}
	$N$
	& \multicolumn{2}{c}{$E_N$} & $\log_2\frac{E_{N/2}}{E_N}$
	& \multicolumn{2}{c}{$E_N$} & $\log_2\frac{E_{N/2}}{E_N}$ \\
\midrule
	  5 & $4$ & $61\times10^{-2}$   &            --- & $1$ & $13\times10^{-1}$    &          --- \\
	10 & $1$ & $52\times10^{-2}$   & $\;\:1.6$ & $1$ & $81\times10^{-2}$    & $\;\:2.6$ \\
	20 & $2$ & $95\times10^{-6}$   &   $12.3$ & $6$ & $17\times10^{-6}$    &   $11.5$ \\
	40 & $2$ & $96\times10^{-17}$ &   $36.5$ & $4$ & $13\times10^{-16}$ &   $33.8$ \\
\bottomrule\\
\end{tabular}
\end{center}
\caption{$L_\infty$ errors of $(K^k_n*f_G)$ on $\RR^m$ with $k=2\pi$.}
\label{tab:h:b}
\end{table}

Similarly to Table \ref{tab:p:a}, Table \ref{tab:h:a} illustrates the enhancement in the accuracy by using a higher sampling frequency for the construction grid.
However, the results with the oscillating kernels exhibit quite different aspects:
first, the level of the construction error is significantly larger than the previous non-oscillatory cases.
Second, the corruption occurs also for small $N$.
Recall, for non-oscillatory kernels in the previous section, the main source of the construction error was $\vphi$, whose Fourier coefficients decay quite rapidly.
Although the interpolation error of the Gaussian exhibits faster decay than that of $\vphi$, the Gaussian is a somewhat special case.
For many other functions (such as $f_B$ and $f_P$), $\vphi$ is good enough not to cause such a issue.
However, for oscillatory kernels, the major source of the construction error is $\alpha$ and $\w{K}$.
Simply, we need a construction grid fine enough to represent the oscillating $\alpha$ with similar accuracy as the data
($\w{K}$ oscillates similarly to $\alpha$).
Hence, the sampling frequency of the construction grid should be increased proportionally to the wavenumber.

On the other hand, if the sampling frequency of the data is not high enough for the representation of the oscillating kernel,
the sampling frequency is not high enough for the representation of the solution either.
Hence, ironically, in most of practical applications such as examples in \S\ref{sec:ls} and \S\ref{sec:bie},
we cannot expect the enhanced accuracy by merely increasing the sampling frequency of the construction grid.
The first convolution from the exactly given data may be accurate at each grid point.
However, the interpolation error of the first solution will not be as accurate as that of the data, due to the insufficient grid resolution.
This first solution with insufficient accuracy will be (a part of ) the source of the next convolution.
Hence, if an appropriate grid is chosen for the data and the solution, we can use the same grid for the construction.

\begin{figure}[t]
\begin{center}
\includegraphics[width=0.48\textwidth]{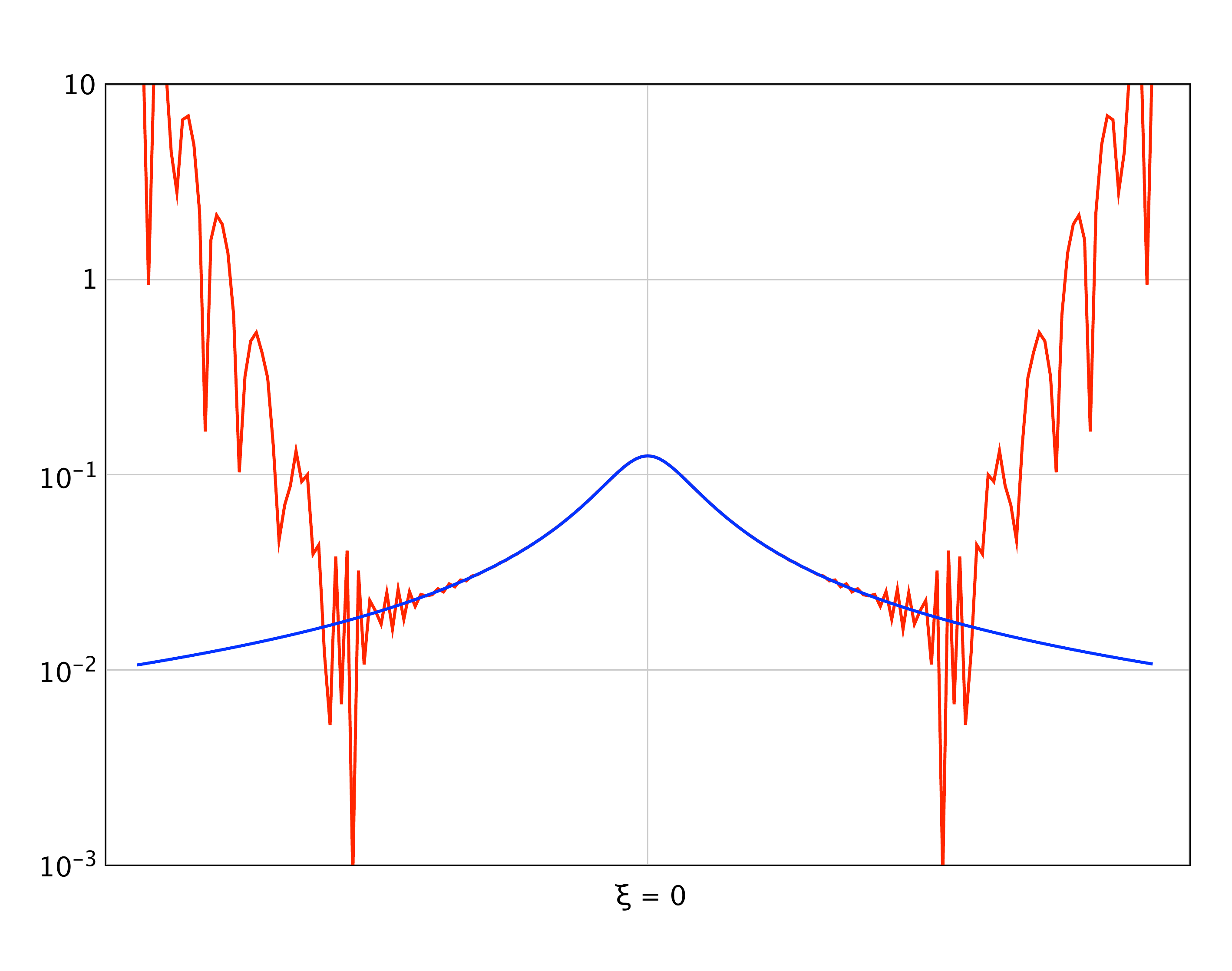}
\includegraphics[width=0.48\textwidth]{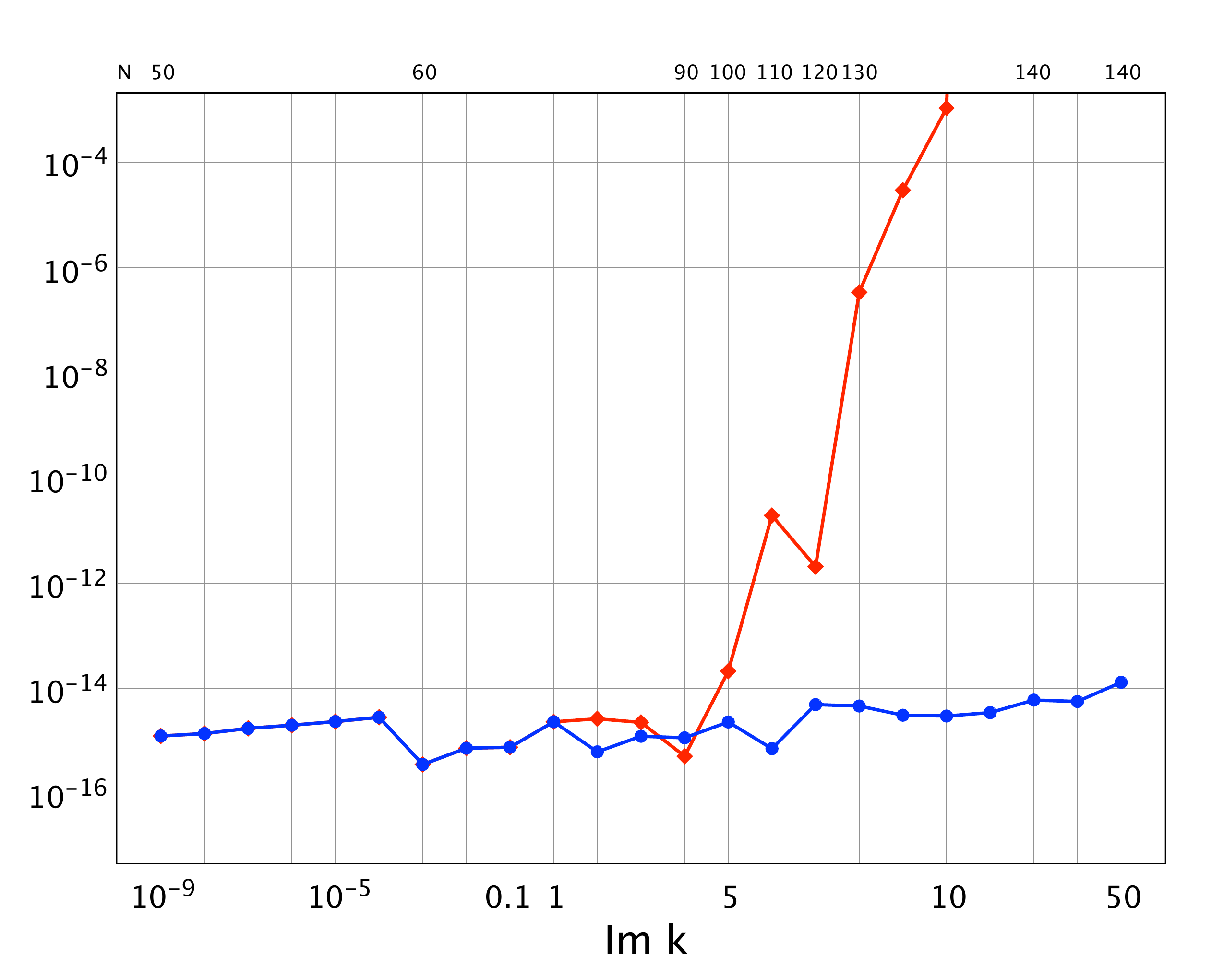}
\end{center}
\caption{The Helmholtz kernel $K^k_2$ on $\RR$: (left) the weights for $k=4i$ in the frequency domain and (right) errors up to $k=50i$.}
\label{fig:h:a}
\end{figure}

The situation becomes more complicated if $\operatorname{Im}(k)\ne0$.
Take $K^k_2$ with a pure imaginary wavenumber $k=i\lambda$ for example.
\begin{equation}
	K^k_2(r) = \frac{i}{4} H^{(1)}_0(i \lambda r) = \frac{K_0(\lambda r)}{2\pi} \sim \sqrt{\frac{1}{8\pi\lambda r}}e^{-\lambda r}
\end{equation}
As described in \S\ref{sec:helmholtz}, the kernel can be factored as $K^k_2(r)=\beta^k_2(r)\log(r)+\w{K^k_2}(r)$ with
\begin{equation}
	\beta^k_2(r)=-\frac{J_0(i \lambda r)}{2\pi}=-\frac{I_0(\lambda r)}{2\pi} \sim \sqrt{\frac{1}{8\pi^3\lambda r}} \left( e^{\lambda r} - i e^{-\lambda r} \right) .
\end{equation}
$I_0$ and $K_0$ are the modified Bessel functions of the first and the second kinds.
Notice that the exponentially decaying kernel is decomposed by two exponentially increasing $\beta^k_2$ and $\w{K}^k_2$.
With finite precision arithmetic, the meaningful signal ($\sim e^{-\lambda r}/\sqrt{r}$) in $\beta^k_2$ is completely lost for $\lambda r \gg1$.
Thus, we should choose sufficiently small $\BR$ so that the $\beta^k_2$ evaluated within $\BR$ remains small $\sim1$.
The left figure in Figure~\ref{fig:h:a} shows the spectrums of two sets of the weights generated for $\lambda=4$.
For the accurate weights, the radius of $\BR$ is set to $1$.
The inaccurate weights obtained with $\BR$ of the radius 3 deviate wildly after the mid-frequency.
As we increase $\lambda$ with the fixed radius of 3, the points of deviation move to lower frequencies, and soon only the noise remains.
Since the construction grid should be fine enough to resolve $\vphi$ with $\BR$, its sampling frequency should be increased accordingly.
In order to prevent the generation of a huge construction grid due to the high sampling frequency, we can take only a small subset of grid containing $\BR$.
Outside of the subset, the kernel is practically zero,
hence, the spectrum of the weights can be obtained by zero-padded DFT followed by the truncation of the unnecessary high frequency results.

In the right figure of Figure~$\ref{fig:h:a}$, the errors with the appropriately constructed weights are less than $10^{-14}$.
Without the $\lambda$-dependent control on $\BR$, the error rapidly increases after $\lambda\ge5$.

\subsection{Application: Lippmann-Schwinger equation}
\label{sec:ls}
Consider the acoustic scattering problem in an inhomogeneous medium.
Let the constant $k$ be the wavenumber of the medium at infinity (or we may call it the ambient wavenumber) and
let $n$ the non-constant refractive index such that $(n-1)$ is compactly supported.
Then, for time-harmonic problems, the acoustic pressure $u$ satisfies the equation,
\begin{equation}
	\Delta u + k^2 n(x) \,u = 0 .
\end{equation}
Let $u^i$ be the given incident wave which satisfies $\Delta u^i + k^2\,u^i = 0$.
We assume the scattered field $u^s=u-u^i$ satisfies the radiation condition at infinity.
The explicit form of the radiation condition depends on the dimension.
In this paper, we consider an example in $\RR^2$, where the radiating scattered field satisfies
\begin{equation}
	\lim_{r\rightarrow\infty} \sqrt{r} \left( \frac{\partial u^s}{\partial r} - i\,k\,u \right) = 0 .
\end{equation}

The above scattering problem is equivalent to solving the integral equation,
\begin{equation}
\label{eqn:ls}
	u(x) - k^2 \int_D K_2^k(r) \left( n(y)-1 \right) u(y) \,dy = u^i(x) ,
\end{equation}
which is also known as the Lippmann-Schwinger equation.
The linear operator on the left side of the equation (denoted by $\mathcal{L}$) can be written as
\begin{equation}
	\mathcal{L} = I - k^2\,\mathcal{K}\,\mathcal{N}
\end{equation}
in terms of the convolution $\mathcal{K}u=K_2^k * u$ and the multiplication $\mathcal{N}u=(n-1)u$.
Since $\mathcal{K}$ is compact and $\mathcal{N}$ is bounded,
$\mathcal{L}$ is a Fredholm operator of the second kind and the problem is well-posed (cf. \cite[\S8]{CoKr98}).

In this example, we consider a medium with three \emph{bumps},
\begin{equation}
	n(y) = 1 - 0.9 \sum_{i=1}^3 e^{2 \left( 1- \left( 1 - (y_1-a_i)^2-(y_2-b_i)^2 \right)^{-1} \right) } ,
\end{equation}
with the centers at $(a_i,b_i) = (1,0)$, $(-1,3)$, and $(-1,-3)$.
Inside of the bumps, the wavespeed slows down.
The equation $\mathcal{L}u=u^i$ is solved using GMRES for incident planewave with the wavenumber $k=5\pi$ and the direction $(1,0)$.
The domain of the computation is $[-6,6]^2$.
In order to measure the error, the reference solution is computed on the uniform $1280\times1280$ grid.
For the convergence test presented in Table~\ref{tab:ls}, we measured $L_\infty$ error on the grid points.
Note that, unlike the convergence tests in previous sections, the reported errors are not just for the convolution itself but
includes the combined effect of the whole solution procedure.
Figure~\ref{fig:ls} depicts the total field the scattering problem.

\begin{table}[t]
\begin{center}
\begin{tabular}{r r@{.}l c}
\toprule
	$N$ & \multicolumn{2}{c}{$E_N$} & $\log_2\frac{E_{N/2}}{E_N}$ \\
\midrule
	  80 & $1$ & $42\times10^{-1}$   &         --- \\
	160 & $2$ & $08\times10^{-4}$   & $\;\:9.4$ \\
	320 & $2$ & $07\times10^{-7}$   & $10.0$ \\
	640 & $7$ & $42\times10^{-11}$ & $11.4$ \\
\bottomrule\\
\end{tabular}
\end{center}
\caption{$L_\infty$ error of the solution of (\ref{eqn:ls}): $k=5\pi$ on $[-6,6]^2$.}
\label{tab:ls}
\end{table}

\begin{figure}[t]
\begin{center}
\includegraphics[clip,width=0.7\textwidth]{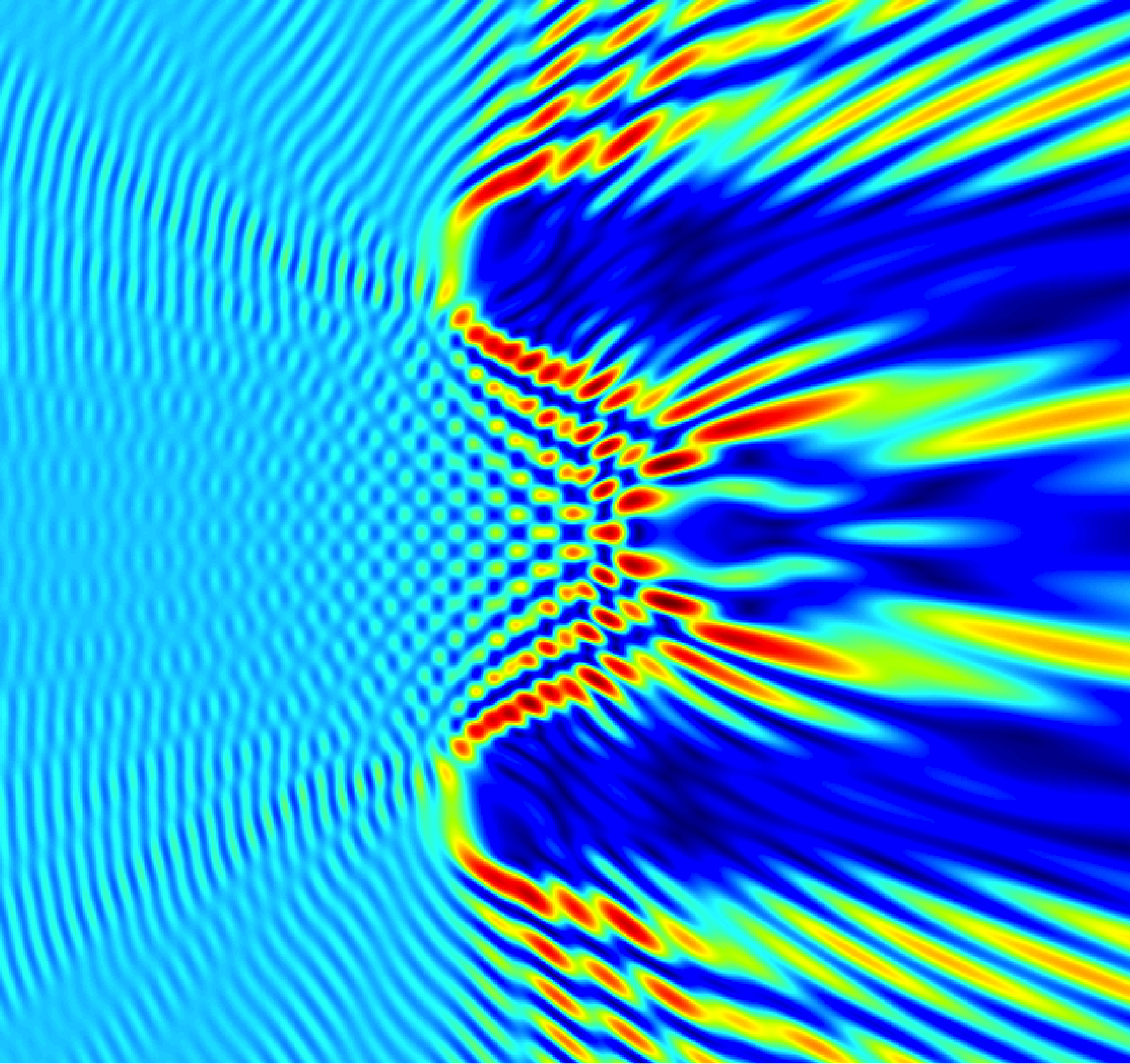}
\end{center}
\caption{Solution of the Lippmann-Schwinger equation (\ref{eqn:ls}): $k=5\pi$ on $[-6,6]^2$.}
\label{fig:ls}
\end{figure}

\subsection{Application: boundary integral equation}
\label{sec:bie}
The presented quadrature rules can be applied for integral operators on curves in $\RR^m$.
Let each curve be represented by a smooth periodic parameterization: $s\in[0,L]\mapsto y(s)\in\RR^m$.
Assume $y'(s)\ne0$ for every $s\in[0,L]$.
For $s,t\in[0,L]$, define the ratio,
\begin{equation}
	\gamma(s,t) = \frac{r(s,t)}{|t-s|} \quad\text{where}\quad r(s,t) = \norm{y(t)-y(s)} .
\end{equation}
Then, $\gamma(t,t)=\norm{y'(t)}$ and $\gamma(s,\cdot)$ is smooth, positive, and periodic on $[0,L]$.
The logarithmic and the power-law singularities can be rewritten as
\begin{align}
	\log(r) &= \log(|s-t|) + \log(\gamma) \\
	r^{-\nu} &= \gamma^{-\nu} |s-t|^{-\nu} .
\end{align}
Thus, the above radial singularities in $\RR^m$ can be recast to singularities of the same kind on $[0,L]$.

In this example, we solve the exterior scattering problem,
\begin{equation}
\label{eqn:bie}
	\frac{1}{2}\,\psi(s) + \int_\Gamma D(s,t)\,\psi(t) \,\norm{y'(t)}\,dt - i k \int_\Gamma S(s,t)\,\psi(t) \,\norm{y'(t)}\,dt = -u^i(s) ,
\end{equation}
where the combine integral equation is employed to avoid the interior resonance issue.
The single layer kernel $S$ is identical to the Helmholtz kernel $K^k_2$ except the additional $\log(\gamma)$ in the smooth remainder;
\begin{align}
	S(s,t) &\equiv \frac{i}{4} H^{(1)}_0(kr) \equiv \alpha_s(s,t)\log(|s-t|) + \w{S}(s,t) \\
	\alpha_s(s,t) &= -\frac{J_0(kr)}{2\pi} \\
	\alpha_s(t,t) &= -\frac{1}{2\pi} \\
	\w{S}(t,t) &= \frac{i}{4} - \frac{\gamma_{euler}}{2\pi} - \frac{1}{2\pi}\log\left(\frac{k\norm{y'(t)}}{2}\right)
\end{align}
where $\gamma_{euler}$ is the Euler constant.
The double layer kernel $D$ is given by
\begin{align}
	D(s,t) &\equiv n(t)\cdot\nabla K^k_2(r)
	= \frac{d}{dr} \left( \frac{i}{4} H^{(1)}_0(kr) \right) \left\{ \frac{n(t)\cdot(y(t)-y(s))}{r} \right\} \nonumber \\
	&= \frac{kr}{4} \bigg\{ Y_1(kr) - i J_1(kr) \bigg\} \bigg\{ \frac{n(t)\cdot(y(t)-y(s))}{r^2} \bigg\} ,
\end{align}
where $n(t)$ is the outward normal vector at $t$.
Since (cf. \S\ref{sec:helmholtz:even})
\begin{equation}
	z\,Y_1(z) = \frac{2 z J_1(z)}{\pi} \log(z) - \frac{2}{\pi} + O(z^2) ,
\end{equation}
the kernel can be factored by
\begin{align}
	D(s,t) &\equiv \alpha_d(s,t)\log(|s-t|)+\w{D}(s,t) \\
	\alpha_d(s,t) &= \frac{kr J_1(kr)}{2\pi} \bigg\{ \frac{n(t)\cdot(y(t)-y(s))}{r^2} \bigg\} \\
	\alpha_d(t,t) & = 0 \\
	\w{D}(t,t) &= -\frac{1}{2\pi} \lim_{s\to t} \bigg\{ \frac{n(t)\cdot(y(t)-y(s))}{r^2} \bigg\} = \frac{c(t)}{2\pi\norm{y'(t)}^2}
\end{align}
where $c(t)$ is the curvature at $t$.
The above information is all we need to construct the quadrature weights (at each target point).
Beware that a fast convolution cannot be applied for this case since the kernels are not functions of $(t-s)$,
hence, the resulting discrete operator is not a circular matrix.

Figure~\ref{fig:bie} shows the total field constructed from the obtained solution $\psi$.
Each scatterer is a translation of the popular \emph{kite} shape,
\begin{equation}
	y(t) = \left( \cos t + 0.65\cos(2t) - 0.65 , 1.5\sin t \right) \quad t \in [0,2\pi] .
\end{equation}
The incident planewave with $k=5\pi$ comes from the upper-left corner of the domain $[-8,8]^2$ to the lower-right corner.
For the interaction between disconnected curves, the kernels become smooth periodic, hence, the usual trapezoidal rule can be used.

Table~\ref{tab:bie} shows the error of the solution $\psi$ on the boundary and the error of the far-field evaluate from the obtained $\psi$.
The far-field signature can be computed by
\begin{equation}
	u_\infty(\h{x}) = \frac{e^{-i\pi/4}}{\sqrt{8\pi k}}
	\int_\Gamma \left( k\,n(t)\cdot\h{x} + k \right) e^{-ik\,\h{x}\cdot y(t)} \,\psi(t) \,\norm{y'(t)}\,dt
\end{equation}
where $\h{x}=x/|x|$, $x\in\RR^2$.
The reference solution is obtained with $N=640$.

\begin{table}[t]
\begin{center}
\begin{tabular}{r r@{.}l c r@{.}l c}
\toprule
	& \multicolumn{3}{c}{solution error}
	& \multicolumn{3}{c}{far-field error} \\
\cmidrule(r){2-4}
\cmidrule(r){5-7}
	$N$
	& \multicolumn{2}{c}{$E_N$} & $\log_2\frac{E_{N/2}}{E_N}$
	& \multicolumn{2}{c}{$E_N$} & $\log_2\frac{E_{N/2}}{E_N}$ \\
\midrule
	  80 & $1$ & $16\times10^{-1}$   &         ---  & $1$ & $45\times10^{-1}$   &        --- \\
	160 & $1$ & $61\times10^{-5}$   & $12.8$ & $2$ & $06\times10^{-7}$   & $19.4$\\
	320 & $4$ & $17\times10^{-14}$ & $28.5$ & $1$ & $34\times10^{-14}$ & $23.9$\\
\bottomrule\\
\end{tabular}
\end{center}
\caption{$L_\infty$ errors of the exterior scattering problem (\ref{eqn:bie}): $k=5\pi$.}
\label{tab:bie}
\end{table}

\begin{figure}[t]
\begin{center}
\includegraphics[width=0.7\textwidth]{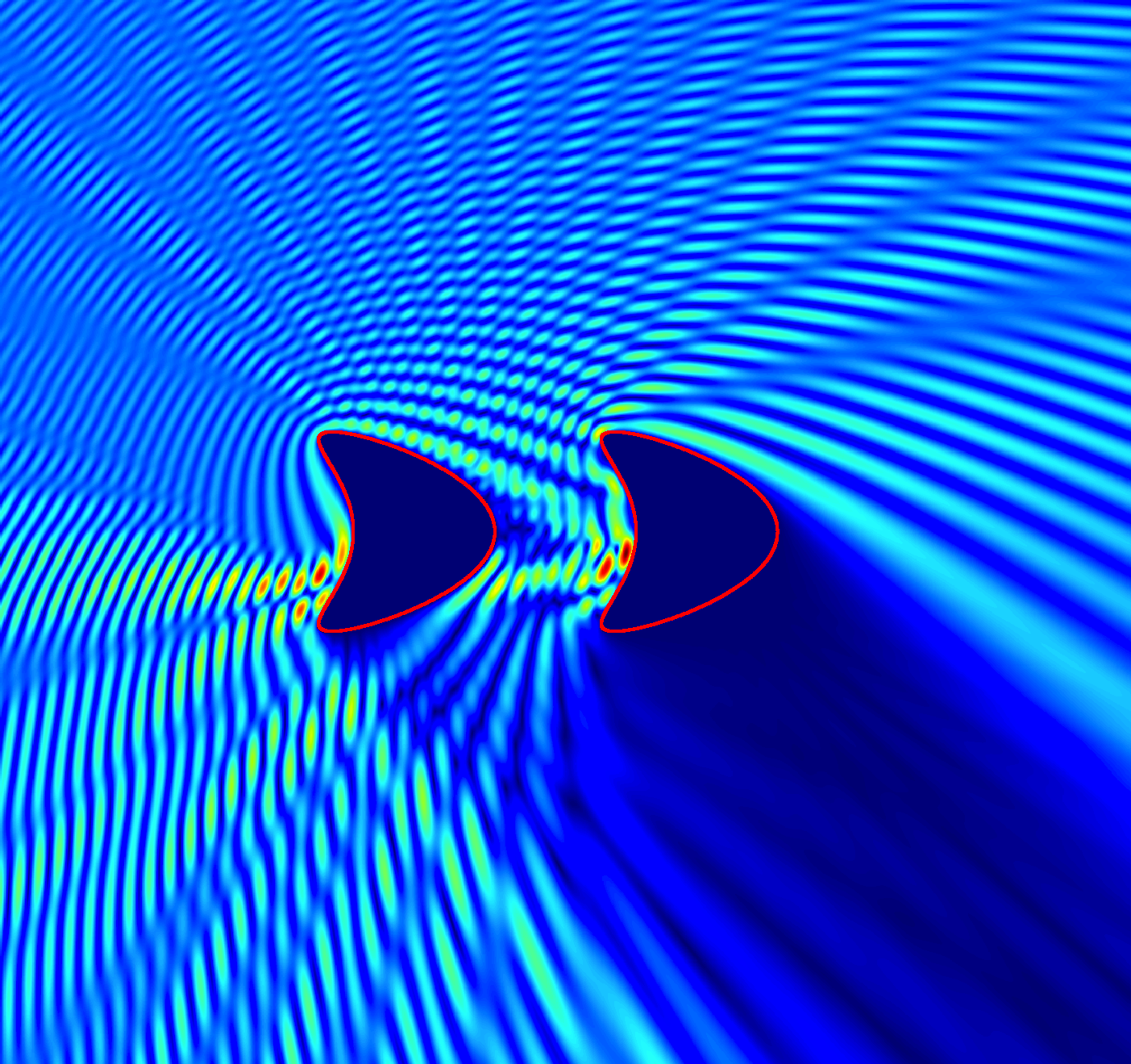}
\end{center}
\caption{Total field of the exterior scattering problem (\ref{eqn:bie}) on $[-8,8]^2$: $k=5\pi$.
The obstacles are centered at $\pm2$ on $x$ axis.}
\label{fig:bie}
\end{figure}

\subsection{Conclusions}
We have presented the construction scheme for corrected trapezoidal rules for integral operators with weakly singular kernels.
Numerical results show that the quadrature rules converge as fast as the trapezoidal rule applied for non-singular periodic integrands
as predicted by the presented convergence analysis.
Especially, for smooth data, the quadrature rules exhibit super-algebraic convergence.
The examples in \S\ref{sec:ls} and \S\ref{sec:bie} demonstrate the effectiveness of the proposed quadratures in the high precision solve of integral equations.


\appendix

\section{Asymptotic Expansion of $M^{(\mu)}_1$}\label{sec:M1:asy}
Dividing the domain of integral, we rewrite $M^{(\mu)}_1(\rho)$ as
\begin{equation}
	M^{(\mu)}_1(\rho) = M^{(\mu)}_1(a) + I_a(\mu,\rho) \quad\text{where}\quad I_a(\mu,\rho) \equiv \frac{1}{\rho^\mu} \int_a^\rho t^{\mu-1} \cos(t) \,dt .
\end{equation}
Applying integration-by-parts $(2N)$ times, we obtain
\begin{equation}
	I_a(\mu,\rho) = I_a^N(\mu,\rho) + \frac{(-1)^N}{\rho^\mu} \prod_{\ell=1}^{2N} (\ell-\mu) \int_0^\rho t^{\mu-1-2N} \cos(t)\,dt ,
\end{equation}
where
\begin{equation}
\begin{split}
	I_a^N(\mu,\rho)
	&= \rho^{-1}\sin(\rho)\,P^N(\mu,\rho) - a^{-1}(a/\rho)^\mu\sin(a)\,P^N(\mu,a) \\
	&+ \rho^{-2}\cos(\rho)\,Q^N(\mu,\rho) - a^{-2}(a/\rho)^\mu\cos(a)\,Q^N(\mu,a) .
\end{split}
\end{equation}
Polynomials $P^N$ and $Q^N$ are defined by
\begin{align}
	P^N(\mu,t) &= \sum_{\ell=0}^{N-1} C_\ell \, t^{-2\ell} \quad\text{where}\quad C_\ell = (-1)^{\ell} \prod_{j=1}^{2\ell} (j-\mu) \\
	Q^N(\mu,t) &= (\mu-1) \, P^N(\mu-1,t) .
\end{align}
Then, the relative error is given by
\begin{equation}
	\left| I_a(\mu,\rho) - I_a^N(\mu,\rho) \right| < \frac{(2N)!}{a^{2N}} \left| I_a(\rho) \right| .
\end{equation}
By choosing $a=14\pi$ and $N=13$, the relative error is less than $10^{-16}$.
Moreover, with $a$ being an integer multiple of $\pi$, $I^N_a$ has a slightly simpler form.

\section{Asymptotic Expansion of $M^{(\mu)}_2$}\label{sec:M2:asy}
Similarly to $M^{(\mu)}_1$, we divide the domain of integral;
\begin{equation}
	M^{(\mu)}_2(\rho) = M^{(\mu)}_2(a) + I_a(\mu,\rho) \quad\text{where}\quad I_a(\mu,\rho) \equiv \frac{1}{\rho^\mu} \int_a^\rho t^{\mu-1} J_0(t) \,dt
\end{equation}
Utilizing $t\,J_0(t)=(t\,J_1(t))'$ and $J_1(t)=-J'_0(t)$, $(2N)$ applications of the integration-by-parts result in
\begin{equation}
	I_a(\mu,\rho) = I_a^N(\mu,\rho) + \frac{(-1)^N}{\rho^\mu} \prod_{\ell=1}^{N} (2\ell-\mu)^2 \int_0^\rho t^{\mu-1-2N} J_0(t)\,dt 
\end{equation}
where
\begin{equation}
\begin{split}
	I_a^N(\mu,\rho)
	&= \rho^{-1}J_1(\rho)\,P^N(\mu,\rho) - a^{-1}(a/\rho)^\mu J_1(a)\,P^N(\mu,a) \\
	&+ \rho^{-2}J_0(\rho)\,Q^N(\mu,\rho) - a^{-2}(a/\rho)^\mu J_0(a)\,Q^N(\mu,a)
\end{split}
\end{equation}
with
\begin{align}
	P^N(\mu,t) &= \sum_{\ell=0}^{N-1} C_\ell \, t^{-2\ell} \quad\text{where}\quad C_\ell = (-1)^{\ell} \prod_{j=1}^{\ell} (2j-\mu)^2 \\
	Q^N(\mu,t) &= \sum_{\ell=0}^{N-1} (\mu-2\ell-2) \, C_\ell \, t^{-2\ell} .
\end{align}
The relative error of the asymptotic expansion is given by
\begin{equation}
	\left| I_a(\mu,\rho) - I_a^N(\mu,\rho) \right| < \frac{4^N(N!)^2}{a^{2N}} \left| I_a(\rho) \right| .
\end{equation}
With $a=44.7593189976528217$ (which is the 14th zero of $J_1$) and $N=15$, the relative error is less than $10^{-16}$.

\bibliographystyle{amsplain}
 

\end{document}